\newtheorem{theorem}{Theorem}[section]
\newtheorem{lemma}[theorem]{Lemma}
\newtheorem{remark}{Remark}[section]
\newtheorem{corollary}{Corollary}[section]
\renewcommand{\(}{$\,}
\renewcommand{\)}{\,$}
\newcommand{\normx}[1]{\left \|#1 \right \|}
\newcommand{\normX}[1]{\left \|#1 \right \|}
\def\rb0{r_0^{\flat}}
\def\xiv{\bf{\xi}}
\def\ND{\mathcal{N}}
\newcommand{\bb}[1]{\boldsymbol{#1}}
\renewcommand{\Gamma}{\varGamma}
\renewcommand{\Pi}{\varPi}
\renewcommand{\Sigma}{\varSigma}
\renewcommand{\Delta}{\varDelta}
\renewcommand{\Lambda}{\varLambda}
\renewcommand{\Psi}{\varPsi}
\renewcommand{\Phi}{\varPhi}
\renewcommand{\Theta}{\varTheta}
\renewcommand{\Omega}{\varOmega}
\renewcommand{\Xi}{\varXi}
\renewcommand{\Upsilon}{\varUpsilon}
\def\argmax{\operatornamewithlimits{argmax}}
\def\R{I\!\!R}
\def\E{I\!\!E}
\def\P{I\!\!P}
\def\kappa{\varkappa}
\def\T{\top}
\def\gammav{\bb{\gamma}}
\def\gammav{\bb{\gamma}}
\def\tauv{\bb{\tau}}
\def\xiv{\bb{\xi}}
\def\zetav{\bb{\zeta}}
\def\dLb12{T_h^{\flat}(\theta_1^{\flat}, \theta_2^{\flat})}
\def\alphab{\alpha^{\flat}}
\def\alphab12{\alpha^{\flat}(\theta, \theta_0)}
\def\chib12{\chi^{\flat}(\theta, \theta_0)}
\def\Lb0{L^{\flat}(\theta_0)}
\def\L0{L(\theta_0)}
\newcommand{\vertiii}[1]{{\left\vert\kern-0.25ex\left\vert\kern-0.25ex\left\vert #1 
    \right\vert\kern-0.25ex\right\vert\kern-0.25ex\right\vert}}
\def\Ind{\operatorname{1}\hspace{-4.3pt}\operatorname{I}}
\def\ex{\mathrm{e}}
\def\gm{\mathtt{g}}
\def\xx{\mathtt{x}}
\def\xxc{\xx_{c}}
\def\nunu{\nu_{0}}
\def\dimp{p}
\def\N{\mathbb{N}}
\def\lambdav{\bb{\lambda}}
\def\zq{z}
\theoremstyle{remark}
\begin{document}

\begin{frontmatter}
\title{Strong Gaussian Approximation for the Sum of Random Vectors}
\runtitle{Strong Gaussian Approximation}

\begin{aug}
\author[A]{\fnms{Nazar} \snm{Buzun}\ead[label=e1]{\{n.buzun}},
\author[A]{\fnms{Nikolay} \snm{Shvetsov}\ead[label=e2]{nikolay.shvetsov}}
\and
\author[A]{\fnms{Dmitry V.} \snm{Dylov}\ead[label=e3]{d.dylov\}@skoltech.ru}}
\address[A]{
  Skolkovo Institute of Science and Technology\\
  Bolshoi blvd. 30/1, Moscow, 121205, Russia \\
\printead{e1,e2,e3}}

\end{aug}

\begin{abstract}
This paper derives a new strong Gaussian approximation bound for the sum of independent random vectors. The approach relies on the optimal transport theory and yields \textit{explicit} dependence on the dimension size $p$ and the sample size $n$. This dependence establishes a new fundamental limit for all practical applications of statistical learning theory. Particularly, based on this bound, we prove approximation in distribution for the maximum norm in a high-dimensional setting ($p >n$).   
\end{abstract}

\begin{keyword}
\kwd{Gaussian Approximation}
\kwd{Central Limit Theorem}
\kwd{Wasserstein Distance}
\kwd{High-Dimensional Statistics}
\end{keyword}

\end{frontmatter}

\section{Introduction}
Arguably, accurate estimation of a strong approximation is much more interpretable and meaningful than accurate estimation of the proximity of distributions.
In the latter case, one requires
a joint distribution of the original sum and the Gaussian vector to be built in one probabilistic space so that their proximity to each other could be gauged.
Gaussian approximation for the sum of random vectors attracts the attention of the mathematicians because of the uncertain dependence of the outcome on the dimension size $p$ \cite{Zai13,Bentkus,Chernozhukov} and is one of the most important tasks in the field of limit theorems of the probability theory.
In \citep{Zai13}, the uncertain property of the approximation was discerned, yielding the following finite-sample bound: 
\begin{equation} \label{zai_gar}
\P \left( \sup_{1 \leq h \leq n} \left \|  \sum_{i=1}^h \xiv_{i} - \sum_{i=1}^h \gammav_i \right\| > C_1(\alpha) p^{\frac{23}{4} + \alpha} \log p \log n + t \, C_2 p^{\frac{7}{2}} \log p \right) \leq
e^{-t},
\end{equation}
under conditions that the vectors $\{\xiv_i\}_{i=1}^n$ are centered and independent, $\sum_{i=1}^n \xiv_{i}$ and $ \sum_{i=1}^n \gammav_i \in \ND(0, \Sigma)$ have the same variance matrix and finite exponential moments over the same domain in $\R^p$.
The handicap of this result is the power of $p$, which requires extremely large sample sizes $n$ for the approximation to be of any practical value. Asymptotically it is required $n > p^{11}$ samples.    

On the other hand, the multivariate Gaussian approximation in distribution in the general case, studied in \cite{Bentkus}, has been known to yield an alternative error bound that favors smaller sample size. Namely, for all convex events $\mathcal{A}$:
\begin{equation}
\left |
\P \left( \sum_{i=1}^n \xiv_{i} \in \mathcal{A} \right) - \P \left( \sum_{i=1}^n \gammav_{i} \in \mathcal{A} \right)
\right| \leq O\left(\frac{p^{7/4}}{\sqrt{n}} \right).
\label{bentkus}
\end{equation}
For a particular case of Euclidean norm, it was proven in \cite{buzun2019gaussian} that the upper bound approaches the asymptotic value~$p / \sqrt{n}$. 
Naturally, the observed gap in inequalities (\ref{zai_gar}) and (\ref{bentkus}) suggests that the dependence on $p$ in the strong Gaussian approximation may be improved. 

In this paper, we narrow this gap by deriving a new type of strong Gaussian approximation bound for the sum of independent random vectors. Our approach is based on the optimal transport theory and the Gaussian approximation in the Wasserstein distance. 
We derive the approximation in probability in a simplified form without the maximum by~$h$ (refer to expression (\ref{zai_gar})) and our convergence rate is comparable with the approximation in distribution (Refs. \cite{Bentkus, buzun2019gaussian}). 
Specifically, given the sub-Gaussian assumption for vectors  $\{\xiv_i\}_{i=1}^n$  we prove that 
\[
\P \left(  \left \|  \sum_{i=1}^n \xiv_{i} - \sum_{i=1}^n \gammav_i \right\| > O \left (   \frac{ p^{3/2} \log p \log n  }{\sqrt{n}} \right) \, e^{t/\log(np)} \right) \leq
e^{-t}.
\]

\paragraph*{Related work.}  
As already mentioned, the authors of \cite{Zai13} improved the accuracy of strong Gaussian approximation for the sum of random independent vectors. 
In \cite{erd1946certain}, it was observed that the limit distributions of some functional of the growing sums of independent identically distributed random variables (with a finite variance) do not depend on the distribution of individual terms and, therefore, the approximation can be computed if the distribution of the terms has a specific simple form. 
In \cite{einmahl1989extensions}, the work reports new multidimensional results for the accuracy of the strong Gaussian approximation for infinite sequences of sums of independent random vectors. 
Consequently, Gaussian approximation of the sums of independent random vectors with finite moments \cite{Zai07}, including multidimensional approaches \cite{zaitsev2001multidimensional,saharenko2006,gotze2009bounds}, have been reported. 

Notably, the strong Gaussian approximation can help approximate the maximum sum of random vectors in distribution for the high-dimensional case $(p > n)$.  
Gaussian approximation of the maximum function is very useful for justifying the Bootstrap validity and for approximating the distributions with different statistics in high-dimensional models. Besides, the aforementioned papers \cite{Chernozhukov, chernozhukov2017detailed}, some relevant results can also be found in works \cite{koike2019gaussian} and \cite{SUN2020108523}, where the authors rely on Malliavin calculus and high-order moments to assess the corresponding bounds. In \cite{fang2020highdimensional}, the authors go after the same approximation as reported herein; however, using a completely different tool: the Stein method instead of the Wasserstein distance, yielding a result that is only valid under the constraint that the measure of i.i.d. vectors $\xiv_i$ has the Stein kernel (a consequence of the log-concavity).

In the most recent works \cite{chernozhukov2019improved}, the result of \cite{Chernozhukov} was superseded by considering specific distribution of the max statistic in high dimensions. 
This statistic takes the form of the maximum over the components of the sum of independent random vectors and its distribution plays a key role in many high-dimensional econometric problems. 
The new iterative randomized Lindeberg method allowed the authors to derive new bounds for the
distributional approximation errors. 
Specifically, in \cite{chernozhukov2020nearly}, new nearly optimal bounds ($\log^{3/2} p / \sqrt{n}$) for the Gaussian approximation  over the class of rectangles were obtained, functional in the case when the covariance matrix of the scaled average is non-degenerate. The authors also demonstrated that the bounds can be further improved in some special smooth and zero-skewness cases.

The demand for strong Gaussian approximation can frequently be encountered in a variety of statistical learning problems, being very important for developing efficient approximation algorithms. Likewise, strong Gaussian approximation is sought after in many applications across different domains (physics, biology, economy, \textit{etc}.), making it one of the most desired tools of modern probability theory.
Practical demand for the approximation are widespread and range from change point detection \cite{cpd_boot}, to variance matrix estimation \cite{avanesov2018}, to selection of high-dimensional sparse regression models \cite{Chernozhukov_2013}, to adaptive specification testing \cite{Chernozhukov_2013}, to anomaly detection in periodic signals \cite{cpd_ecg}, and to many others.

\section{Main result}

 In this work, we prove the Theorem that finds the upper bound for the strong Gaussian approximation. Herein, we consider a sum of independent zero-mean random vectors $\xiv = \sum_{i=1}^n \xiv_i$ in $\R^{p}$ that has a covariance matrix 
$$
\Sigma = \E \xiv \xiv^T.
$$
A Gaussian random vector $\gammav \in \ND(0, \Sigma)$ has the same 1-st and the 2-nd moments. 
 We say that a vector \(\xiv\) has restricted exponential or sub-Gaussian moments if $ \exists \; \gm>0$
\begin{equation}
    \log \E \exp\bigl( \lambdav^{\T}\xiv\bigr)
    \le
     \| \lambdav \|^{2}/2,
    \quad
    \forall \lambdav \in \R^{\dimp}, \quad \| \lambdav \| \le \gm 
\label{expgamgm_2}.
\end{equation}

\begin{theorem} \label{main_res} Under condition that $ \forall i: \, \frac{\sqrt{n}}{\nunu} \Sigma^{-1/2} \xiv_i $ is sub-Gaussian (expression (\ref{expgamgm_2}) with \( 0.3 \gm \ge \sqrt{\dimp} \)) exists a Gaussian vector $ \gammav \in \ND (0, \Sigma)$ such that 

\[
\P \left(  \left \|   \xiv -  \gammav \right\| > C(n, p) \|\Sigma \|^{1/2} \, e^{t/\log(np)} \right) \leq
e^{-t},
\]
where 
\[
C(n, p) =  C \,  \nunu^2 \,   \frac{p \log(np)^{3/2} }{\sqrt{n}}  +  5 \nunu^3 \, \frac{  p^{3/2} \log(np) \log(2n) }{\sqrt{n}},
\] for some absolute constant $C$.

\end{theorem}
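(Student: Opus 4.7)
My approach combines a single-step Wasserstein-type Gaussian approximation for a sum of sub-Gaussian vectors, realized via an optimal transport plan, with a dyadic recursion in the spirit of the Komlos--Major--Tusnady construction that propagates the single-step estimate from the full sum down through blocks of decreasing size, assembling a global coupling out of local conditional couplings. The two terms of $C(n,p)$ correspond to the two mechanisms: the $p \log(np)^{3/2}/\sqrt{n}$ term is the direct cost of a single top-level transport, while the $p^{3/2}\log(np)\log(2n)/\sqrt{n}$ term records the accumulation across $\log_2 n$ dyadic levels.

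\textbf{Single-step estimate and dyadic propagation.} The core lemma I would prove is the following: for $T = \sum_{i=1}^m \xiv_i$ with independent sub-Gaussian $\xiv_i$ and $\gammav' \sim \ND(0, \Cov(T))$, there is a coupling satisfying
\[
\P\!\left( \|T - \gammav'\| > \alpha_m \|\Sigma\|^{1/2} e^{s/\log(mp)} \right) \leq e^{-s},
\quad \alpha_m = O\!\left( \nunu^2 p \sqrt{\log(mp)}/\sqrt{m} \right),
\]
proved by combining condition (\ref{expgamgm_2}), a truncation removing vectors with $\|\xiv_i\| \gtrsim \nunu \sqrt{p \log(np)}$, and a Talagrand transport--entropy inequality (or entropic regularization of the Kantorovich problem whose dual furnishes pointwise deviations). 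Applying this estimate once at the top level with $m=n$ produces the first summand of $C(n,p)$. To obtain the second, I partition $\{1,\dots,n\}$ dyadically into $K = \lceil \log_2 n \rceil$ levels; at each level I split every current $(T, \gammav')$ into two conditional halves and couple them by an optimal conditional transport preserving marginal sums. Running the single-step lemma inside each conditional split and aggregating the $2^k$ block errors at level $k$ via a union bound plus sub-Gaussian coordinate-wise concentration gives, after telescoping across all levels, a cumulative error of order $\nunu^3 p^{3/2} \log(np) \log(2n)/\sqrt{n}$: the extra $\sqrt{p}$ factor beyond the single-step cost is the price of conditional (rather than unconditional) transport in $\R^p$, $\log(2n)$ counts the levels, and chaining sub-Gaussian tails across levels yields the deviation inflation $e^{t/\log(np)}$.

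\textbf{Main obstacle.} The hardest part is the conditional optimal-transport estimate at each dyadic split. Conditioning on a block sum destroys the independence of its two halves, and the conditional law of each half is an exponentially tilted (Bayesian) version of the original whose sub-Gaussian constant must be carefully tracked; this perturbation analysis is what ultimately produces the $\nunu^3$ dependence in the bound. A secondary difficulty is converting a quadratic Wasserstein cost into the pointwise exponential tail on $\|\xiv - \gammav\|$ required by the statement, which can be handled either by iterating the coupling across higher $L_q$-Wasserstein moments with Rosenthal-type control, or by constructing the transport through entropic regularization whose dual immediately yields pointwise deviation estimates. If both obstacles are overcome with the right $p$-dependence, combining the two contributions recovers exactly the $C(n,p)$ of the theorem.
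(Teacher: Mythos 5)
There is a genuine gap, on two counts. First, your single-step lemma is the entire content of the theorem and you have not proved it. The tools you name do not deliver it: Talagrand's transport--entropy inequality controls $W_2$ in terms of the relative entropy of the law of $T$ with respect to the Gaussian, and bounding that entropy at rate $p/\sqrt{m}$ for a sum of independent vectors is itself a hard entropic-CLT problem, not a consequence of truncation plus sub-Gaussianity. More importantly, a $W_2$ bound is not enough: to get the tail $\P(\|\xiv-\gammav\|>\Delta e^{t/\log(np)})\le e^{-t}$ via Markov/Chebyshev one needs $W_L$ for $L\asymp\log(np)$ with explicit control of the $L$-dependence, and that is precisely where all the work lies --- in the paper this is Theorem \ref{WLTsubG}, proved through the Ornstein--Uhlenbeck interpolation (\ref{tranz_def}), the derivative bound of Lemma \ref{was_int_bound}, the Hermite/Stein exchangeable-pair expansion (\ref{phi_plus_tau}), and Rosenthal's inequality (Lemmas \ref{rose}, \ref{Hsum}). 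You defer this to a one-sentence remark (``iterating across higher $L_q$-Wasserstein moments'' or an unspecified duality for entropic regularization that ``immediately yields pointwise deviation estimates''); neither is a known result you can cite, so the argument does not close.

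Second, the dyadic KMT recursion is both unnecessary and the wrong explanation of the two terms in $C(n,p)$. The theorem couples only the \emph{full} sum $\xiv$ with a single Gaussian $\gammav$ --- there is no supremum over partial sums as in (\ref{zai_gar}) --- so a single top-level coupling suffices once $W_{\log(np)}(\xiv,\gammav)$ is bounded; one then applies (\ref{markov_bound}) with $L=\log(np)$ and $\Delta=C(n,p)\|\Sigma\|^{1/2}e^{t/\log(np)}$. In the paper the second summand $\nunu^3 p^{3/2}\log(np)\log(2n)/\sqrt{n}$ does not count dyadic levels: it is the third-moment term $\mu_3$ of Theorem \ref{WLT}, and the factor $\log(2n)$ comes from the integral $\int_{1/n}^{\infty} e^{-t}(e^{2t}-1)^{-1}\,dt \le \tfrac12\log(2n)$ near the singular endpoint of the interpolation. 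Your dyadic scheme, by contrast, forces you to couple exponentially tilted conditional laws at every split; this is exactly the mechanism by which multivariate KMT constructions (e.g.\ \cite{Zai13}) lose large powers of $p$, and your assertion that each conditional transport costs only an extra $\sqrt{p}$ and a factor $\nunu$ is unsubstantiated --- if it were provable, it would by itself be a major improvement over the known dyadic bounds.
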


\noindent \emph{Summary of the proof:} Our approach is based on the optimal transport theory \cite{OttoVillani, BobkovEd, Bonis}. It consists of the following steps. 

\begin{itemize}
 \item A very useful tool in  approximation in probability is the Wasserstein distance (expression \ref{equation_3}). It reveals the joint distribution and minimises the difference between two random variables by increasing their dependence with some fixed marginal distributions.   The multivariate Central Limit Theorem for $W_2$ distance was described by \cite{Bonis}, where the author 
 uses Stein's method and the entropy estimation for the derivative of $W_2$.  Below, basing on the same technique, we obtain Gaussian approximation bound for $W_L$ with explicit dependence on the power $L$ of the cost function and dimension parameter $p$ (Theorem \ref{WLTsubG}), such that 
\[
W_L(\xiv, \gammav) \leq 
\frac{C \, L^{3/2} \nunu^2 \| \Sigma \|^{1/2} }{\log L } \left( \frac{p}{\sqrt{n}} + \frac{L}{n^{1 - 1/L}} \right)  +  \frac{5 L \nunu^3 p^{3/2} \| \Sigma \|^{1/2} \log(2n) }{\sqrt{n}}.
\]
The proof of the last statement is the most difficult part of this work. It starts with derivative estimation of $W_L$.  For that we construct a Markov random process $X_t$ that transits $X_0 = \xiv$ to $X_\infty = \gammav$. From Lemma \ref{was_int_bound} follows that for the random process $X_t$ under smoothness condition it holds  \[
W_L(X_a, X_b) \leq
\int_{a}^{b}  \left\{ \E \left \|  \E \left[ \frac{d}{dt} X_t \bigg | X_t \right] \right \|^L  
\right \}^{1/L} dt .
\]Then in order to derive an upper bound for $\E \left \|  \E \left[ \frac{d}{dt} X_t  | X_t \right] \right \|^L  $ we involve Rosenthal's martingale inequality (Lemma \ref{rose}), convolution with Hermite polynomials (Lemma \ref{boni}), and modified variant of  Stein's method (Theorem \ref{WLT}).    
\item The sub-Gaussian property is required for moments estimation of random variables $\|\xiv_i \|$ (used in Theorem \ref{WLT}) and in Lemma \ref{subg_moments} we have proved the following upper bound for each $k \geq 2$ and $1 \leq i \leq n$
\[
   \E \left \| \Sigma^{-1/2} \xiv_i  \right \|^k \leq \frac{4 \nunu^k}{n^{k/2}} (\sqrt{p} + \sqrt{2k})^{k}.
\]
\item Denote a joint distribution of $\xiv$ and $\gammav$ by $\pi = \pi(\xiv, \gammav)$.   Markov's inequality provides the following statement \[ 
\min_{\pi \in \Pi[\xiv, \gammav]} \P \left(   \normx{\xiv - \gammav} > \Delta \right) \leq  \frac{W_L^L(\xiv, \gammav)} {\Delta^L}, 
\]
where we set $L = \log (np)$ and $\Delta = C(n, p) \| \Sigma \|^{1/2} \, e^{t/\log(np)}$.

\end{itemize}

\begin{remark}
Note that convergence in probability in Central Limit Theorem does not exist, \textit{i.e.}, there is no random vector $ \gammav$, such that $\xiv(n)$ converges to it when $n \to \infty$. It follows from the Kolmogorov's  $0$-$1$ law. However, at the same time, for each $n$ one should take a new Gaussian vector which depends on $n$ and 
\[
 \| \xiv (n) - \gammav (n) \| \overset{p}{\longrightarrow} 0,
\quad 
n \to \infty,
\]
which follows from Theorem \ref{main_res}.
\end{remark}

\section{Approximation in probability}
Having introduced the main definitions in the previous section, we want to find an upper bound for $ | \normx{\xiv}  - \normx{\gammav} |$  in probability. 
Markov's inequality gives the following for some $L \geq 0$:
\[\label{ME}
\P \left ( \left| \normx{\xiv}  - \normx{ \gammav } \right|  > \Delta \right) 
\leq \P \left( \normx{\xiv - \gammav }  > \Delta \right) 
\leq \frac{\E \normx{\xiv - \gammav }^L}{\Delta^L}. 
\]
The differences $ \normx{ \xiv  -  \gammav }$ decrease when the dependence between the random variables increases. So we want to make them as dependent as possible.
Denote by $\pi(\xiv, \gammav)$ a joint distribution of $\xiv$ and $\gammav$. 
One then has to minimize over $\pi$ with a fixed marginals ($\pi \in \Pi[\xiv, \gammav]$) the following Wasserstein distance 
\begin{equation}\label{equation_3}
W_L^L(\xiv, \gammav) = \min_{\pi \in \Pi[\xiv, \gammav]} \left\{ \E  \| \xiv  - \gammav \|^L \right\}.
\end{equation}
\noindent Notation $\pi \in \Pi[\xiv, \gammav]$ means that $\int \pi(\xiv, d \gammav) $ equals the distribution of $\xiv$ and $\int \pi(d \xiv, \gammav) $ equals the distribution of $\gammav$. Accounting to the previous inequalities,
\begin{equation} \label{markov_bound}
\min_{\pi \in \Pi[\xiv, \gammav]} \P \left(  \left| \normx{\xiv}  - \normx{ \gammav } \right|> \Delta \right) \leq  \frac{W_L^L(\xiv, \gammav)} {\Delta^L}. 
\end{equation}
Below, we will study one/multi dimensional cases separately because of the difference in the complexity of the proof.    An accurate solution of this problem is applicable for a lot of different fields of science \cite{NIPS2013_cuturi, Chernozhukov, cpd_boot, cpd_ecg} and thus so important.

\subsection{One dimensional case}

Here we will obtain a simple and close to optimal (up to a logarithmic factor) estimation of the main bound (\ref{markov_bound}) for the random variable $\xi$ with additional assumption on its distribution (\ref{cdf_diff}).  We will generalise and improve this estimation in the next section with much more difficult approach.

\begin{lemma} [\cite{BobkovEd} Proposition 5.1]
 \label{was_1d}
 Let $\xi, \gamma \in \mathcal{P}(\mathbb{R})$ have the cumulative distributions $F$ and $\Phi$ (standard Gaussian cdf) respectively.
Assume that, for some real numbers $\varepsilon > 0$ and $d \geq 1$,
\begin{equation}
\label{cdf_diff}
| F(x) - \Phi(x) | \leq \varepsilon \left(1 + |x|^d \right) e^{-x^2/2}.
\end{equation}Then 
\[
W_L(\xi, \gamma) \leq C(L, d) \varepsilon,
\]
where  one may take $C(L, d)=(CLd)^{3(d+1)/2}$ with some absolute constant $C$.
\end{lemma}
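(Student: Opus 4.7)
\noindent\emph{Proof plan.} The plan is to use the classical fact that in one dimension the optimal coupling for any convex cost is the monotone (quantile) one, giving
\[
W_L^L(\xi,\gamma) = \int_0^1 \bigl| F^{-1}(u) - \Phi^{-1}(u) \bigr|^L\, du
= \int_{-\infty}^{\infty} \bigl| F^{-1}(\Phi(x)) - x \bigr|^L \varphi(x)\, dx,
\]
where $\varphi$ is the standard Gaussian density and the second equality comes from the change of variable $u = \Phi(x)$. This reduces the problem to a pointwise estimate on $|F^{-1}(\Phi(x)) - x|$ followed by integration against $\varphi$.

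For the pointwise bound, I would set $y = F^{-1}(\Phi(x))$, so that $F(y) = \Phi(x)$. Then the hypothesis (\ref{cdf_diff}) applied at $y$ gives
\[
|\Phi(x) - \Phi(y)| = |F(y) - \Phi(y)| \leq \varepsilon \bigl(1 + |y|^d\bigr) e^{-y^2/2}.
\]
On the other hand, by the mean value theorem,
\[
|\Phi(x) - \Phi(y)| = \varphi(\zeta)\, |x - y| \geq \Bigl(\min_{t \in [x \wedge y,\, x \vee y]} \varphi(t)\Bigr)\, |x - y|,
\]
so
\[
|x - y| \;\leq\; \frac{\varepsilon (1 + |y|^d)\, e^{-y^2/2}}{\min_{t \in [x \wedge y, x \vee y]} \varphi(t)}
\;=\; \sqrt{2\pi}\,\varepsilon\,(1 + |y|^d)\, e^{\,[\max(x^2,\,y^2) - y^2]/2}.
\]
If $|y| \geq |x|$ the exponential factor equals one and we get the clean bound $|y - x| \leq \sqrt{2\pi}\,\varepsilon (1 + |y|^d)$. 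The delicate case is $|x| > |y|$, where the exponential factor $e^{(x^2 - y^2)/2}$ can blow up; here one must use that $|y - x|$ is itself controlled, so $y^2 \geq x^2 - 2|x||x-y|$, and iterate or bootstrap to conclude $|y - x| \lesssim \varepsilon (1 + |x|^d)$ provided $\varepsilon$ is not too large (when $\varepsilon$ is of order $1$ or bigger, the claim reduces to moment bounds that are trivially true up to the stated constant). This bootstrap is the main obstacle and the reason for the rather large exponent $3(d+1)/2$ in $C(L,d)$: resolving the self-consistency loses polynomial factors in $|x|$ and forces us to pay extra $d$'s.

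Once the pointwise bound
\[
\bigl| F^{-1}(\Phi(x)) - x \bigr| \;\leq\; C\,\varepsilon\, \bigl(1 + |x|^d\bigr)^{\alpha}
\]
is established (with $\alpha$ absorbing the bootstrap loss, $\alpha \leq 3$), I would plug it into the quantile representation and obtain
\[
W_L^L(\xi,\gamma) \;\leq\; (C\varepsilon)^L \int_{-\infty}^{\infty} \bigl(1 + |x|^d\bigr)^{\alpha L} \varphi(x)\, dx.
\]
The Gaussian integral is a standard moment of order $\alpha d L$ and is bounded by $(C \alpha d L)^{\alpha d L /2}$. Taking $L$-th roots gives $W_L(\xi,\gamma) \leq \varepsilon \cdot (C L d)^{3(d+1)/2}$, matching the stated constant $C(L,d)$. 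The two quantitative ingredients that have to be tracked carefully throughout are (i) the exponent loss from the tail bootstrap mentioned above, and (ii) the $(Ld)^{Ld/2}$ growth of Gaussian absolute moments; everything else is bookkeeping.
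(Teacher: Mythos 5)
The paper gives no proof of this lemma --- it is quoted verbatim from \cite{BobkovEd} --- so your attempt can only be compared with the cited source, whose argument does begin, as yours does, from the one-dimensional monotone coupling $W_L^L(\xi,\gamma)=\int_0^1|F^{-1}(u)-\Phi^{-1}(u)|^L\,du$. That reduction, the change of variables $u=\Phi(x)$, and the final Gaussian-moment bookkeeping are all sound. The gap sits exactly where you flag ``the main obstacle'': the uniform pointwise bound $|F^{-1}(\Phi(x))-x|\le C\varepsilon(1+|x|^d)^{\alpha}$. Your mean-value argument yields, with $u=|x-y|$ and $A=\sqrt{2\pi}\,\varepsilon(1+|x|^d)$, only the self-consistency relation $u\le A e^{|x|u}$ (in the case $|y|\le|x|$). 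This relation is satisfied both by $u\le eA$ \emph{and} by all sufficiently large $u$, so no amount of ``iterating'' extracts the small root; one needs a continuity or a priori smallness argument to select the small branch, and beyond the critical scale $|x|\sim T$ with $T\sim\varepsilon^{-1/(d+1)}$ no such argument exists: there the hypothesis permits $F$ to have already exhausted its mass (a distribution supported on $[-T,T]$ can satisfy (\ref{cdf_diff}) with $\varepsilon\sim T^{-(d+1)}$), the quantile map sits on the large branch, and $|F^{-1}(\Phi(x))-x|$ grows like $|x|$. Your ``clean'' case $|y|\ge|x|$ is also not clean: the bound $\sqrt{2\pi}\,\varepsilon(1+|y|^d)$ is in terms of $|y|$, and converting it to $|x|$ runs into the same unresolved self-consistency.

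The missing idea is a split of $\int|F^{-1}(\Phi(x))-x|^L\varphi(x)\,dx$ into a bulk region $|x|\le T$, where $A|x|$ is small enough that the mean-value comparison can be closed, and a tail region $|x|>T$, which must be handled not pointwise but by moment estimates: the hypothesis gives $1-F(x)\le 1-\Phi(x)+\varepsilon(1+|x|^d)e^{-x^2/2}$ and the symmetric bound on $F(x)$, hence controlled tails for $\xi$, and the tail contribution to the integral is then bounded directly by $(C(L,d)\varepsilon)^L$. Without this split the asserted global pointwise bound cannot be obtained from the relation you derive, so the plan as written does not go through; with the split, the remainder of your accounting --- the Gaussian moment of order $\alpha dL$ and the $L$-th root --- does produce a constant of the stated shape $(CLd)^{O(d)}$.
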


\begin{corollary} \label{prob_approx_indep}
If the random variables $\xi$ and $\gamma$ fulfill condition (\ref{cdf_diff}), then

\begin{eqnarray*}
&&\min_{ \pi \in \Pi[\xi, \gamma] } \P \left( \left|  \xi  -  \gamma \right| > 
e C(\log n, d) \varepsilon \right)  \leq  \frac{1}{n}.
 \end{eqnarray*}
where  $C(\log n, d)$ is defined in the previous lemma.

\end{corollary}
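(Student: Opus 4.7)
The plan is to specialize the generic Markov--Wasserstein chain (\ref{markov_bound}) to the scalar setting with the sharp choice $L = \log n$, and then substitute the one-dimensional Wasserstein estimate supplied by Lemma \ref{was_1d}. This is essentially a two-line calculation once the two ingredients are lined up correctly.

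Concretely, I would start from (\ref{markov_bound}) with $L = \log n$, obtaining
\[
\min_{\pi \in \Pi[\xi,\gamma]} \P\bigl(|\xi - \gamma| > \Delta\bigr) \leq \frac{W_L^L(\xi,\gamma)}{\Delta^L}.
\]
Under hypothesis (\ref{cdf_diff}), Lemma \ref{was_1d} gives $W_L(\xi,\gamma) \leq C(L,d)\varepsilon = C(\log n, d)\varepsilon$, and raising both sides to the $L$-th power yields $W_L^L(\xi,\gamma) \leq \bigl(C(\log n, d)\varepsilon\bigr)^L$. Choosing $\Delta = e\, C(\log n, d)\varepsilon$ then collapses the right-hand side to $e^{-L}$, and substituting $L = \log n$ converts this into $1/n$, which is exactly the bound claimed in the corollary.

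The only genuine ``choice'' in the argument is the calibration $L = \log n$, and it is essentially forced: taking a smaller $L$ would produce a Markov tail slower than $n^{-1}$ for the same threshold, while taking a larger $L$ would let the factor $C(L,d) = (CLd)^{3(d+1)/2}$ from Lemma \ref{was_1d} start to inflate $\Delta$ beyond the logarithmic tolerance implicit in the statement. There is no real obstacle here, since all of the analytic work—most notably the cumulative-distribution bound (\ref{cdf_diff}) being converted into a genuine transport cost—is already encapsulated in Lemma \ref{was_1d}; the corollary is just its probabilistic repackaging through (\ref{markov_bound}).
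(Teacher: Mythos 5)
Your proposal is correct and follows exactly the paper's own argument: apply Lemma \ref{was_1d} to bound $W_L(\xi,\gamma)\leq C(L,d)\varepsilon$, plug this into the Markov--Wasserstein inequality (\ref{markov_bound}), and set $L=\log n$ with $\Delta = e\,C(\log n,d)\varepsilon$ so the bound becomes $e^{-L}=1/n$. Nothing is missing.
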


\begin{proof}
Apply Lemma \ref{was_1d} for pair $(\xi, \gamma)$:
\begin{eqnarray*}
&& W_L^L(\xi, \gamma) = \min_{\pi \in \Pi[\xi, \gamma]}\E  |\xi -\gamma |^L \leq C^L(L, d) \varepsilon^L.
\end{eqnarray*}
In order to find a bound for the probability involve expression (\ref{markov_bound}):
\begin{eqnarray*}
&&\min_{ \pi \in \Pi[\xi, \gamma] } \P \left( \left|  \xi  -  \gamma \right| > \Delta \right)  \leq  \left(\frac{ C(L, d) \varepsilon}{\Delta} \right)^L.
 \end{eqnarray*}
 Set $L = \log n$ and $\Delta = e C(L, d) \varepsilon$.

 \end{proof}
 
 \begin{remark}
 It $(2d)^{d/2} < 1 / \varepsilon$ condition (\ref{cdf_diff}) is stronger than sub-Gaussian property (\ref{expgamgm_2}) that we assume in our main Theorem \ref{main_res}. The justification  relies on the following integration heuristic:
 \[
 \varepsilon \int_{-\infty}^{\infty} e^{\lambda x} |x|^d  e^{-x^2/2} dx \leq  \varepsilon \sup_{x} |x|^d  e^{-x^2/4}
\int_{-\infty}^{\infty} e^{\lambda x}  e^{-x^2/4} dx \leq \varepsilon (2d)^{d/2} e^{2 \lambda^2}. 
 \]
 \end{remark}

\subsection{Multidimensional case}

This part involves multivariate Gaussian approximation in the Wasserstein distance. We first resort proving the approximation of derivative of  $W_L(\xiv, \gammav)$, and only then we utilize Markov's inequality (\ref{markov_bound}) in order to obtain the approximation in probability (\ref{markov_bound}) discussed above.
\noindent Here we will use a specified for the Wasserstein distance version of Stein's method \cite{stein_meth} proposed in paper \cite{Bonis}. For that define a smooth transition from  $\xiv$ to $\gammav$ parametrized by $t$:
\begin{equation} \label{tranz_def}
X_t = e^{-t} \xiv + \sqrt{1 - e^{-2t}} \gammav, \quad t \in [0, \infty].
\end{equation}
\begin{remark}
The choice of this particular transition is explained by the fact that Markov process $X_t$ has generator $A f(x) = \nabla^T \Sigma \nabla f(x) - x^T \nabla f(x)$ and Gaussian stationary measure \cite{bakry_hal}. It is also known as Ornstein--Uhlenbeck process.  
\end{remark}

\noindent Further we will require one important property of this process that will help us to find an upper bound for the derivative of $W_L$ by $t$. It was proposed in paper \cite{OttoVillani} that deals with particular case derivative estimation of $W_2$. There exists a function $\mathcal{G}_t(x)$ that effects the transition of  measure $\mu_t(x)$ of $X_t$ in the following way $\forall f \in \mathbb{C}^1$:
\begin{equation} \label{phi_def}
\frac{d}{dt} \int f(x) d \mu_t(x)  =  \int \nabla^T f(x) \nabla \mathcal{G}_t(x) d \mu_t(x),
\quad t \in [0, \infty).
\end{equation}

\noindent  Its gradient has explicit representation 
\[
\nabla \mathcal{G}_t(X_t) = \E \left[ \frac{d}{dt} X_t \bigg | X_t \right],
\]
since 
\[
\lim_{\Delta \to 0} \frac{\E f(X_{t + \Delta}) - \E f(X_{t}) }{\Delta} = \lim_{\Delta \to 0} \E \nabla^T f(X_t) \frac{\E [ X_{t + \Delta} - X_{t} | X_t ] }{\Delta} = \E \nabla^T f(X_t) \E \left[ \frac{d}{dt} X_t \bigg | X_t \right].  
\]
And differentiating equation (\ref{tranz_def}) by time we derive that 

\begin{equation}\label{phi_expl_def}
\nabla \mathcal{G}_t(X_t) = - e^{-t} \E \left( X_0 - \frac{e^{-t}}{\sqrt{1 - e^{-2t}}} X_\infty \bigg| X_t \right).
\end{equation}

\noindent The next result allows us to bound an 
increment the of Wasserstein distance in a small interval of the transition parameter $t$. 

\begin{lemma} \label{was_int_bound} Let $\nabla \mathcal{G}_t(X_t) \in \R^p$, $t \in [0, \infty] $ be uniformly continuous and bounded random process.  For any cost function $c(x-y)$ in Wasserstein distance $W(c)$ and infinitely small time shift $ds$ it holds
\[
W(c)(X_{t}, X_{t + ds})  \leq \E c \big(ds \nabla \mathcal{G}_t  (X_{t})\big)
\]
and moreover if $\exists f: \R \to \R$ such that  $\forall t,s \geq 0$
\begin{equation}\label{Wtriangle}
f\left \{ W(c)(X_0, X_{t+s}) \right\} -  f\left\{W(c)(X_0, X_{t}) \right\} \leq f \{W(c)(X_t, X_{t+s}) \}  
\end{equation}
then for any $a, b \geq 0$
\[
f\{W(c)(X_a, X_b)\} \leq
\int_{a}^{b} \frac{f \left\{ \E c(ds \nabla \mathcal{G}_t)  
\right \}}{ds} dt .
\]
Particularly, if $c(x-y)$ is any norm in power $L$, then 
\[
W_L(X_a, X_b) \leq
\int_{a}^{b}  \left\{ \E \| \nabla \mathcal{G}_t \|^L  
\right \}^{1/L} dt .
\]
\end{lemma}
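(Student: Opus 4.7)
My approach follows the velocity-field viewpoint of \cite{OttoVillani, Bonis}: I would prove the three claims in sequence, each feeding into the next. First, I upgrade the identity (\ref{phi_def}) into an infinitesimal coupling bound on $W(c)(X_t, X_{t+ds})$. Second, I iterate the subadditivity condition (\ref{Wtriangle}) along a partition of $[a,b]$ and pass to a Riemann-sum limit. Third, I specialize to norm-power costs, where (\ref{Wtriangle}) collapses to the ordinary triangle inequality for $W_L$.

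\textbf{Step 1 (infinitesimal coupling).} To show $W(c)(X_t, X_{t+ds}) \leq \E c\bigl(ds\, \nabla \mathcal{G}_t(X_t)\bigr)$, I would construct an explicit coupling via the transport map $T_{t,ds}(x) = x + ds\,\nabla \mathcal{G}_t(x)$. Applying (\ref{phi_def}) to any $f \in \mathbb{C}^1$ and expanding $f \circ T_{t,ds}$ to first order in $ds$ gives
\[
\int f\, d\mu_{t+ds} - \int f\, d\mu_t \;=\; ds \int \nabla^{\T} f\, \nabla \mathcal{G}_t\, d\mu_t + o(ds) \;=\; \int f\, d\bigl[(T_{t,ds})_*\mu_t\bigr] - \int f\, d\mu_t + o(ds),
\]
so the push-forward $(T_{t,ds})_*\mu_t$ matches $\mu_{t+ds}$ up to an $o(ds)$ perturbation. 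The coupling $\bigl(X_t, T_{t,ds}(X_t)\bigr)$ then yields the claimed bound, modulo a correction that disappears in the Riemann-sum limit of Step 2.

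\textbf{Step 2 (telescoping and Riemann sum).} With the infinitesimal bound in hand, I partition $a = t_0 < t_1 < \cdots < t_N = b$ with mesh $\Delta t_i = t_{i+1}-t_i$ and iterate (\ref{Wtriangle})---applicable with base point $X_a$ in place of $X_0$ by the time-homogeneous Markov structure of the Ornstein--Uhlenbeck process---to obtain
\[
f\bigl\{W(c)(X_a, X_b)\bigr\} \;\leq\; \sum_{i=0}^{N-1} f\bigl\{W(c)(X_{t_i}, X_{t_{i+1}})\bigr\} \;\leq\; \sum_{i=0}^{N-1} \Delta t_i \cdot \frac{f\bigl\{\E c(\Delta t_i\,\nabla \mathcal{G}_{t_i})\bigr\}}{\Delta t_i}.
\]
Letting $\max_i \Delta t_i \to 0$ and using the uniform continuity and boundedness of $t \mapsto \nabla \mathcal{G}_t(X_t)$ (which are the standing hypotheses of the lemma) identifies the limit with $\int_a^b f\{\E c(ds\, \nabla \mathcal{G}_t)\}/ds\, dt$.

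\textbf{Step 3 (norm-power case) and main obstacle.} For $c(x) = \|x\|^L$ and $f(u) = u^{1/L}$, condition (\ref{Wtriangle}) reduces to the ordinary triangle inequality $W_L(X_0, X_{t+s}) \leq W_L(X_0, X_t) + W_L(X_t, X_{t+s})$ for the metric $W_L$, and homogeneity of the norm gives $f\{\E c(ds\, \nabla \mathcal{G}_t)\}/ds = \bigl(\E \|\nabla \mathcal{G}_t\|^L\bigr)^{1/L}$, producing the final inequality. The hard part is Step 1: the Wasserstein distance is an infimum over couplings, so turning the dual identity (\ref{phi_def}) about expectations of smooth test functions into a genuine coupling of measures requires controlling the $o(ds)$ remainder in a norm strong enough to yield a vanishing transport correction. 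This is precisely where the uniform-continuity and boundedness assumptions on $\nabla \mathcal{G}_t(X_t)$ do the real work; once they are in place, Steps 2 and 3 amount to Riemann-sum bookkeeping and a direct computation.
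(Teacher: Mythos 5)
Your overall architecture (transport map for the infinitesimal bound, then telescoping/integration via (\ref{Wtriangle}), then specialization to $c=\|\cdot\|^L$ with $f=\sqrt[L]{\cdot}$) matches the paper's, and your Steps 2 and 3 are sound — indeed you are more careful than the paper about replacing the base point $X_0$ by $X_a$. The problem is Step 1, and it is a genuine gap rather than omitted bookkeeping. Your map $T_{t,ds}(x)=x+ds\,\nabla\mathcal{G}_t(x)$ pushes $\mu_t$ forward onto a measure that you only show agrees with $\mu_{t+ds}$ to order $o(ds)$ \emph{when tested against a fixed $C^1$ function}, i.e.\ in the weak topology. But $\bigl(X_t, T_{t,ds}(X_t)\bigr)$ is then a coupling of $\mu_t$ with $(T_{t,ds})_*\mu_t$, not with $\mu_{t+ds}$; to conclude anything about $W(c)(X_t,X_{t+ds})$ you would need $W(c)\bigl((T_{t,ds})_*\mu_t,\,\mu_{t+ds}\bigr)=o(ds)$ \emph{in the Wasserstein metric}, plus a triangle inequality for $W(c)$ with general cost $c$. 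Weak $o(ds)$-closeness does not give Wasserstein $o(ds)$-closeness (one needs uniform control of the cost on the discrepancy), and this per-step error must also be uniform in $t$ to survive summation over $N\sim (b-a)/ds$ partition intervals. Declaring that the uniform-continuity and boundedness hypotheses "do the real work" here does not close this; that is precisely the step that needs an argument.

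The paper avoids the issue entirely by taking $T_s$ to be the \emph{exact} flow, $\frac{\partial}{\partial s}T_s(x)=\nabla\mathcal{G}_{t+s}(T_s(x))$, $T_0(x)=x$, and proving via the continuity-equation computation (setting $h_s=h\circ T_s^{-1}$, showing $\frac{\partial}{\partial s}h_s=-\nabla^{\T}\mathcal{G}_{t+s}\nabla h_s$, and combining with (\ref{phi_def}) to get $\frac{d}{ds}\int h_s\,d\mu_{t+s}=0$) that $(T_s)_*\mu_t=\mu_{t+s}$ \emph{exactly}. Then $\bigl(X_t,T_{ds}(X_t)\bigr)$ is an honest coupling of $\mu_t$ and $\mu_{t+ds}$, and $W(c)(X_t,X_{t+ds})\le \E\, c\bigl(T_{ds}(X_t)-X_t\bigr)=\E\, c\bigl(ds\,\nabla\mathcal{G}_t(X_t)\bigr)$ with no remainder to control. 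If you want to salvage your Euler-step version, you would have to quantify the distance between the Euler step and the exact flow in the cost $c$ — at which point you may as well work with the exact flow from the start.
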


\begin{proof} Fix an arbitrary point $t$. Define a new mapping $T_{s}$ on the space of the random variable $X_t$ by equation 
\begin{equation}\label{Tdef}
\frac{\partial}{\partial s} T_{s}(x) =   \nabla \mathcal{G}_{t+s} (T_{s}(x)), 
\quad  T_0(x) = x, \quad s \geq 0.
\end{equation}
It turns out that function $x \to T_{s}(x)$ is the push-forward transport mapping, such that $X_{t+s} =  T_{s}(X_t)$ in distribution (see proof of Lemma 2 in \cite{OttoVillani}). In order to be consistent with our notation we provide below another proof of this proposition.  
Note that for an arbitrary functions $h \in C^1$ and $h_s = h(T^{-1}_s)$ it holds 
\[
\frac{d}{ds}h_s(T_s) = \frac{\partial}{\partial s}h_s(T_s) + \nabla^T h_s(T_s)  \frac{\partial}{\partial s} T_{s}  = 0
\]and with differential equation for $T_s$  (\ref{Tdef})  one gets 
\[
\forall x \in \Omega(X_t) : \quad
\frac{\partial}{\partial s}h_s(x) = -  \nabla^T \mathcal{G}_{t+s}(x) \nabla h_s(x).
\]Then involving  property (\ref{phi_def})  leads to 
\[
\frac{d}{ds} \int h_s(x) d \mu_{t+s}(x) 
=  \int \frac{\partial}{\partial s} h_s(x)  + \nabla^T \mathcal{G}_{t+s}(x) \nabla h_s(x) d \mu_{t+s}   = 0.
\]It proves that $T_s^{-1}(X_{t+s}) = X_t$ and subsequently $X_{t+s} = T_s T_s^{-1}(X_{t+s}) = T_s(X_t)$. 
By the definition of Wasserstein distance and the push-forward transport mapping $T_s$  (\ref{Tdef})
\[
W(c)(X_{t}, X_{t + ds}) \leq \E c \big(T_{ds}(X_t) - T_{0}(X_t) \big) = \E c \big(ds \nabla \mathcal{G}_t  (X_{t})\big)
\]and consequently with the triangle inequality assumption (\ref{Wtriangle})  
\[
\frac{d}{dt} f\{ W(c) (X_0, X_t) \} \leq  \frac{1}{ds}f \{ W(c)(X_{t}, X_{t + ds})\} 
\leq \frac{1}{ds}f \{ \E c (ds \nabla \mathcal{G}_t  (X_{t})) \}.
\]
After the integration we obtain the second statement of this lemma. In case $c(x-y) = \| x-y \|^L$ we set $f = \sqrt[L]{}$ and apply equality 
\[
  \E \| ds \nabla \mathcal{G}_t  (X_{t}) \|^L  =  (ds)^L \E  \| \nabla \mathcal{G}_t  (X_{t}) \|^L .
\]
\end{proof}

\noindent Involve a sequence of useful lemmas that will help us to estimate term $\E  \| \nabla \mathcal{G}_t  (X_{t}) \|^L $ from the previous result.

\begin{lemma} [Rosenthal's inequality \cite{rosental}] \label{rose} For all martingales $S_k = \sum_{i=1}^k{\zetav_i}$ and $2 \leq q < \infty$:
\[
\left [ \E \max_{0 \leq k < \infty} \| S_k \|^q  \right]^{1/q} \leq \frac{C q}{\log q} \left[  \E \left(   \sum_{i=1}^\infty \E_{i-1} \| \zetav_i \|^2  \right)^{q/2} + \E \max_{0 \leq i < \infty} \| \zetav_i \|^q \right]^{1/q}.
\]   
By $\E_{i-1}$ we denote conditional expectation on previous values of the martingale $S_0, \ldots,S_{i-1}$ and by definition $\zetav_i$ are increments of the martingale which may be dependent in general case. 
\end{lemma}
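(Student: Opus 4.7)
The plan is to establish this sharp Rosenthal-type martingale inequality via the standard truncation-and-concentration strategy, whose hallmark is that the coefficient in front is $Cq/\log q$ rather than the crude $Cq$ produced by a bare Burkholder--Davis--Gundy estimate. First I would apply Doob's maximal inequality for Hilbert-space valued submartingales to reduce the running maximum to the terminal value, at the cost of an extra factor $q/(q-1)$ that is absorbed into the absolute constant; it then suffices to bound $[\E \|S_\infty\|^q]^{1/q}$.

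Next, for a truncation threshold $\lambda > 0$ to be chosen later, decompose each increment as $\zetav_i = \zetav_i^{s} + \zetav_i^{\ell}$, where
\[
\zetav_i^{s} := \zetav_i\, \mathbf{1}\{\|\zetav_i\|\leq \lambda\} - \E_{i-1}\bigl[\zetav_i\, \mathbf{1}\{\|\zetav_i\|\leq \lambda\}\bigr]
\]
is the centered small-jump part (still a martingale difference, with $\|\zetav_i^{s}\|\leq 2\lambda$) and $\zetav_i^{\ell}$ is the remainder. The large-jump sum is controlled crudely by $\bigl\|\sum_i \zetav_i^{\ell}\bigr\| \leq 2\sum_i \|\zetav_i\|\mathbf{1}\{\|\zetav_i\|>\lambda\}$, whose $q$-th moment reduces to $\E\max_i \|\zetav_i\|^q$ divided by an appropriate power of $\lambda$ after a standard truncation accounting argument (bounding the number of non-zero summands by the maximum). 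For the small-jump martingale I would invoke the Pinelis--Bernstein exponential inequality for Hilbert-space valued martingales, giving the sub-exponential tail $\P\bigl(\|{\textstyle\sum}_i \zetav_i^{s}\|\geq t\bigr) \leq 2\exp\bigl(-c\, t^2 /(V+\lambda t)\bigr)$, with $V := \sum_i \E_{i-1}\|\zetav_i\|^2$; integrating this tail and extracting the $q$-th root gives $\bigl\|\sum_i \zetav_i^{s}\bigr\|_q \lesssim \sqrt{q}\,(\E V^{q/2})^{1/q} + q\lambda$.

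The decisive step is the optimization of $\lambda$: balancing the sub-exponential contribution $q\lambda$ against the truncation residual (which scales like $(\E\max_i \|\zetav_i\|^q)^{1/q}$ divided by a power of $\lambda$) produces a logarithmic gain, which is exactly what converts the expected factor $Cq$ into the sharp $Cq/\log q$. The main obstacle will be ensuring that this logarithmic improvement survives when $V$ is itself random and the individual jump sizes are heavy-tailed. Technically this requires either a good-$\lambda$ inequality of Burkholder--Davis--Gundy type, or equivalently the Pinelis/Hitczenko sharpening of the classical Rosenthal inequality, together with careful conditioning on $V$ and a dyadic union bound over truncation scales to arrive at the correct absolute constant.
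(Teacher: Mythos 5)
The paper offers no proof of this lemma: it is quoted as a known external result (the sharp Rosenthal inequality for Hilbert-space-valued martingales, due to Hitczenko and, with explicit constants, Pinelis), so your proposal has to be judged on its own merits. The overall architecture you describe --- Doob's maximal inequality, truncation of the increments, an exponential inequality for the bounded part, and a separate estimate for the large jumps --- is the right family of ideas and is essentially how Pinelis derives such bounds.

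Two steps, however, do not work as written. First, the large-jump term: you cannot reduce $\bigl\|\sum_i \zetav_i^{\ell}\bigr\|$ to a quantity of order $\max_i\|\zetav_i\|$ by ``bounding the number of non-zero summands by the maximum''. The number of increments exceeding $\lambda$ is not controlled by the maximum, and the sum of large jumps can vastly exceed it: for $n$ increments each of norm about $\lambda$, the sum is about $n\lambda$ while the maximum is about $\lambda$. Your optimization $\lambda\sim(\E\max_i\|\zetav_i\|^q)^{1/q}/\log q$ pushes you precisely into this regime, where \emph{every} increment may be a ``large jump'', so the truncation residual is not $(\E\max_i\|\zetav_i\|^q)^{1/q}$ divided by a power of $\lambda$; the large-jump part must itself be handled as a (compensated) martingale, or via a good-$\lambda$ / Hoffmann-J{\o}rgensen argument. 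Second, and more fundamentally, the Bernstein tail $\exp\bigl(-c\,t^2/(V+\lambda t)\bigr)$ integrates to moments of order $\sqrt{q}\,(\E V^{q/2})^{1/q}+q\lambda$, with a clean factor $q$ and no logarithm anywhere, and optimizing $\lambda$ against a truncation residual cannot then produce the factor $1/\log q$ in front of \emph{both} terms of the conclusion. The genuine source of the $q/\log q$ constant is the Poissonian (Bennett-type) tail $\exp\bigl(-\tfrac{t}{2\lambda}\log(1+\tfrac{\lambda t}{V})\bigr)$ available for martingales with increments bounded by $\lambda$, whose $L_q$-norm is of order $\sqrt{q}\,V^{1/2}+q\lambda/\log(2+q\lambda^{2}/V)$: the logarithm is inherited from the exponent of the tail, not manufactured by a choice of truncation level. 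As sketched, your argument would yield the classical Rosenthal bound with constant $Cq$, but not the sharp $Cq/\log q$ form that the paper actually needs (the $L=\log(np)$ choice downstream relies on it). The remaining issue you flag --- randomness of $V$ --- is real and is resolved by stopping-time localization at the levels of $V$, but it is secondary to the two gaps above.
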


\begin{lemma} \label{boni} \cite{Bonis} Let $\gammav \in \ND(0, I)$ and for all $\alpha \in \N^p$: $x_\alpha \in \R^p$ and $H_\alpha(\gammav, \Sigma)$ be the multivariate Hermite polynomials, defined by 
\begin{equation}\label{Hdef2}
H_{\alpha}(\gammav, \Sigma) = (-1)^{| \alpha |} e^{\frac{ \gammav^T \Sigma^{-1} \gammav } {2}} \partial^{\alpha} e^{-\frac{  \gammav^T \Sigma^{-1} \gammav } {2}},
\quad
H_{\alpha}(\gammav) = H_{\alpha}(\gammav, I).
\end{equation}
 Then
\[
\left [ \E \normX{ \sum_\alpha x_\alpha H_\alpha(\gamma)  }^q \right]^{2/q} \leq \sum_\alpha \max (1, q-1)^{|\alpha|} \alpha! \normX{x_\alpha }^2.
\]
\end{lemma}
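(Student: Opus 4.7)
The plan is to prove this via Gaussian hypercontractivity (Nelson's inequality) for the Ornstein--Uhlenbeck semigroup, combined with the orthogonality of multivariate Hermite polynomials under the standard Gaussian measure. The key structural fact is that $F \eqdef \sum_\alpha x_\alpha H_\alpha(\gammav)$ is a vector-valued element of Wiener chaos, and the OU semigroup $P_t$ diagonalises in the Hermite basis, $P_t H_\alpha = e^{-t|\alpha|} H_\alpha$. This lets me upgrade the $L^{2}$ estimate (which is immediate from orthogonality) to a quantitative $L^{q}$ estimate.

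First I would reduce to scalar components. Write $F = (F_1, \ldots, F_p)$ with $F_i = \sum_\alpha (x_\alpha)_i H_\alpha(\gammav)$. For $q \ge 2$, Minkowski's inequality in $L^{q/2}$ yields
\[
\left( \E \normX{F}^{q} \right)^{2/q} \;=\; \left\| \sum_{i=1}^{p} F_i^{2} \right\|_{L^{q/2}} \;\le\; \sum_{i=1}^{p} \bigl( \E F_i^{q} \bigr)^{2/q}.
\]

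Second, I would apply scalar hypercontractivity to each $F_i$. Set $t_q = \tfrac{1}{2}\log(q-1)$ so that $e^{2 t_q} = q - 1$, and define $G_i = \sum_\alpha e^{t_q |\alpha|} (x_\alpha)_i H_\alpha(\gammav)$, so that $F_i = P_{t_q} G_i$. Nelson's theorem gives that $P_{t_q}$ is a contraction from $L^{2}$ into $L^{q}$, hence $( \E F_i^{q} )^{1/q} \le ( \E G_i^{2} )^{1/2}$. The orthogonality relation $\E H_\alpha(\gammav) H_\beta(\gammav) = \alpha!\, \delta_{\alpha\beta}$ then produces
\[
\bigl( \E F_i^{q} \bigr)^{2/q} \;\le\; \E G_i^{2} \;=\; \sum_\alpha (q-1)^{|\alpha|}\, \alpha!\, (x_\alpha)_i^{2},
\]
and summing over $i$, using $\sum_{i} (x_\alpha)_i^{2} = \normX{x_\alpha}^{2}$, delivers the announced inequality for $q \ge 2$. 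For $1 \le q \le 2$ the claim follows at once from Lyapunov's inequality $\left( \E \normX{F}^{q} \right)^{2/q} \le \E \normX{F}^{2} = \sum_\alpha \alpha!\, \normX{x_\alpha}^{2}$, since $\max(1, q-1) = 1$ in this regime.

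The main non-trivial input is Nelson's hypercontractivity; once that standard Gaussian-analytic tool is granted, the remainder is orthogonality plus bookkeeping. The only subtle point is that the vector-to-scalar reduction uses Minkowski in $L^{q/2}$, which genuinely requires $q \ge 2$; this is precisely the range in which the factor $(q-1)^{|\alpha|}$ is nontrivial, while the complementary range $q \le 2$ is covered trivially by Lyapunov.
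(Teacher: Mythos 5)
Your proof is correct. The paper does not prove this lemma at all --- it is quoted directly from the cited reference of Bonis --- and your argument (diagonalising the Ornstein--Uhlenbeck semigroup in the Hermite basis, applying Nelson's hypercontractivity at the critical time $e^{2t_q}=q-1$, using orthogonality $\E H_\alpha H_\beta = \alpha!\,\delta_{\alpha\beta}$, and reducing the vector-valued case to scalar components via Minkowski in $L^{q/2}$ for $q\ge 2$, with Lyapunov covering $q\le 2$) is exactly the standard route taken in that source.
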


\begin{corollary} \label{boni_cons} In general case one may use coordinates replacement and property $\nabla_{(Ax)} = A^{-1} \nabla_x$, such that
\[
\left [ \E \normX{ \sum_{\alpha \geq 0} x_\alpha H_\alpha(\gamma, \Sigma)  }^q \right]^{2/q} \leq \sum_{\alpha \geq 0}  \max (1, q-1)^{|\alpha|} \alpha! \normX{ (\Sigma^{-1/2} x)_\alpha }^2.
\]
\end{corollary}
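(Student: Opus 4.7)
The plan is to reduce to Lemma \ref{boni} by the linear change of variables $\tilde\gammav = \Sigma^{-1/2}\gammav$, which has $\tilde\gammav \sim \ND(0,I)$. To apply Lemma \ref{boni} to the standard Gaussian $\tilde\gammav$, I first re-expand $\sum_\alpha x_\alpha H_\alpha(\gammav, \Sigma)$ in the basis of standard Hermite polynomials $\{H_\beta(\tilde\gammav, I)\}$, then invoke the lemma and translate the resulting bound back in terms of the original data.

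To identify the change-of-basis I compute the exponential generating function. From (\ref{Hdef2}) and the Taylor identity $f(\gammav - \tv) = \sum_\alpha \frac{(-\tv)^\alpha}{\alpha!} \partial_\gammav^\alpha f(\gammav)$, I obtain
\[
\sum_\alpha \frac{\tv^\alpha}{\alpha!} H_\alpha(\gammav, \Sigma)
= e^{\gammav^T \Sigma^{-1} \gammav/2}\, e^{-(\gammav - \tv)^T \Sigma^{-1}(\gammav - \tv)/2}
= e^{\tv^T \Sigma^{-1}\gammav - \tv^T \Sigma^{-1}\tv/2}.
\]
Setting $\uv = \Sigma^{-1/2}\tv$ turns the right-hand side into $e^{\uv^T \tilde\gammav - \|\uv\|^2/2} = \sum_\beta \frac{\uv^\beta}{\beta!} H_\beta(\tilde\gammav, I)$. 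Expanding $\uv^\beta = \prod_j \bigl(\sum_k (\Sigma^{-1/2})_{jk} t_k\bigr)^{\beta_j}$ by the multinomial theorem and matching $\tv^\alpha/\alpha!$ coefficients shows that $H_\alpha(\gammav, \Sigma)$ is a linear combination of $\{H_\beta(\tilde\gammav, I) : |\beta| = |\alpha|\}$ whose coefficients are products of entries of $\Sigma^{-1/2}$. Viewing the family $\{x_\alpha : |\alpha| = n\}$ as a symmetric tensor of rank $n$, this yields coefficient vectors $y_\beta = (\Sigma^{-1/2} x)_\beta$, obtained by contracting each of the $n$ tensor slots with $\Sigma^{-1/2}$, such that pointwise
\[
\sum_\alpha x_\alpha H_\alpha(\gammav, \Sigma) = \sum_\beta (\Sigma^{-1/2} x)_\beta H_\beta(\tilde\gammav, I).
\]

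Having matched coefficients, I apply Lemma \ref{boni} directly to the standard Hermite expansion on the right:
\[
\left[\E \normX{\sum_\beta (\Sigma^{-1/2} x)_\beta H_\beta(\tilde\gammav, I)}^q\right]^{2/q}
\leq \sum_\beta \max(1, q-1)^{|\beta|} \beta!\, \normX{(\Sigma^{-1/2} x)_\beta}^2.
\]
Since the random variable on the left equals $\sum_\alpha x_\alpha H_\alpha(\gammav, \Sigma)$ almost surely, and since the summation over $\beta$ coincides with the summation over $\alpha$ up to relabeling (degrees $|\alpha|$ are preserved by the substitution), the claim follows. The main obstacle is the bookkeeping in the generating-function step: carefully verifying that the linear map $\{x_\alpha\} \mapsto \{y_\beta\}$ is exactly contraction with $(\Sigma^{-1/2})^{\otimes n}$, which is precisely the meaning carried by the compact notation $(\Sigma^{-1/2} x)_\alpha$ appearing in the statement.
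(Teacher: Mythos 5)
Your proposal is correct and follows essentially the same route as the paper, which justifies the corollary only by the remark that one changes coordinates $\tilde{\gammav} = \Sigma^{-1/2}\gammav$ and uses $\nabla_{(Ax)} = A^{-1}\nabla_x$ in the Rodrigues formula (\ref{Hdef2}); this induces exactly the degree-preserving linear change of Hermite basis that you identify via the generating function before invoking Lemma \ref{boni} for the identity covariance. Your explicit description of the coefficient map as contraction of each tensor slot with $\Sigma^{-1/2}$ simply makes precise the notation $(\Sigma^{-1/2}x)_\alpha$ that the paper leaves implicit.
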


\noindent Combine the previous two lemmas. 

\begin{lemma} \label{Hsum}  Let $\gammav \in \ND(0, \Sigma)$ and for all $\alpha \in \N^p$: $\zetav^\alpha  \in \mathcal{P}(\R^p)$ and $H_\alpha(\gammav, \Sigma)$ be the multivariate Hermite polynomials (\ref{Hdef2}) and $\zetav^\alpha = \sum_i \zetav_i^\alpha$, where $\{ \zetav_i^\alpha\}$ are independent random vectors. Then for~$2 \leq q < \infty$
\begin{align*}
&\left [ \E \normX{ \sum_{i, \alpha}  \zetav_i^\alpha H_\alpha(\gamma, \Sigma)  }^q \right]^{1/q} \\
&\leq  C(q) \left[  \sum_{i, \alpha}  \alpha! (q-1)^{|\alpha|} \E \normX{\Sigma^{-1/2} \zetav_i^\alpha}^2   \right]^{1/2} \\
& +  \left[  \sum_{\alpha}  \alpha! (q-1)^{|\alpha|}  \normX{\Sigma^{-1/2} \E \zetav^\alpha}^2   \right]^{1/2} \\
&+ C(q) \left[ \sum_i \E \left( \sum_{\alpha}  \alpha! (q-1)^{|\alpha|} \normX{\Sigma^{-1/2} \zetav_i^\alpha}^2  \right)^{q/2}  \right]^{1/q}.
\end{align*}
where one may take $C(q) = \frac{C q}{\log q}$ for some absolute constant $C$. 
\end{lemma}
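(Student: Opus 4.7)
The plan is to split the target statistic into a deterministic-coefficient piece and a mean-zero piece, handle the former directly by Corollary \ref{boni_cons}, and reduce the latter to a sum of independent centered vectors in an enlarged Euclidean space so that Rosenthal's inequality (Lemma \ref{rose}) applies cleanly.

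First I center each summand: set $\bar\zetav_i^\alpha:=\zetav_i^\alpha-\E\zetav_i^\alpha$ and write
\[
T:=\sum_{i,\alpha}\zetav_i^\alpha H_\alpha(\gammav,\Sigma)=\underbrace{\sum_\alpha(\E\zetav^\alpha)H_\alpha(\gammav,\Sigma)}_{T_\mu}+\underbrace{\sum_{i,\alpha}\bar\zetav_i^\alpha H_\alpha(\gammav,\Sigma)}_{\bar T}.
\]
Minkowski's inequality gives $[\E\|T\|^q]^{1/q}\leq[\E\|T_\mu\|^q]^{1/q}+[\E\|\bar T\|^q]^{1/q}$. The coefficients $\E\zetav^\alpha=\sum_i\E\zetav_i^\alpha$ in $T_\mu$ are deterministic, so Corollary \ref{boni_cons} applies directly and yields exactly the second bound of the lemma, $[\E\|T_\mu\|^q]^{1/q}\leq\bigl(\sum_\alpha\alpha!(q-1)^{|\alpha|}\|\Sigma^{-1/2}\E\zetav^\alpha\|^2\bigr)^{1/2}$.

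For $\bar T$ the key step is to invoke Corollary \ref{boni_cons} \emph{conditionally}. Set $c_\alpha:=\alpha!(q-1)^{|\alpha|}$ and $\bar U^\alpha:=\sum_i\bar\zetav_i^\alpha$. Since $\gammav$ is independent of all $\zetav_i^\alpha$, conditioning on the $\zetav$-randomness fixes the $\bar U^\alpha$ and leaves $\gammav\sim\ND(0,\Sigma)$, so the corollary yields
\[
\E_\gamma\bigl[\|\bar T\|^q\mid\zetav\bigr]\leq\Bigl(\sum_\alpha c_\alpha\|\Sigma^{-1/2}\bar U^\alpha\|^2\Bigr)^{q/2}.
\]
Taking outer expectation reduces the task to estimating $\bigl(\E\bar Y^{q/2}\bigr)^{1/q}$ where $\bar Y:=\sum_\alpha c_\alpha\|\Sigma^{-1/2}\bar U^\alpha\|^2$. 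I then stack the Hermite index into the norm: let $X_i:=(\sqrt{c_\alpha}\,\Sigma^{-1/2}\bar\zetav_i^\alpha)_\alpha$, viewed as a single random element of a large Euclidean space. The $X_i$ are independent and centered in $i$, with $\|X_i\|^2=\sum_\alpha c_\alpha\|\Sigma^{-1/2}\bar\zetav_i^\alpha\|^2$ and $\bar Y=\bigl\|\sum_i X_i\bigr\|^2$. Because the $X_i$ are independent centered, $\E_{i-1}\|X_i\|^2=\E\|X_i\|^2$, and Rosenthal's inequality applied to $\sum_i X_i$, combined with $(a+b)^{1/q}\leq a^{1/q}+b^{1/q}$, yields
\[
\bigl[\E\bigl\|\textstyle\sum_i X_i\bigr\|^q\bigr]^{1/q}\leq \frac{Cq}{\log q}\Bigl[\bigl(\sum_{i,\alpha}c_\alpha\E\|\Sigma^{-1/2}\bar\zetav_i^\alpha\|^2\bigr)^{1/2}+\bigl(\E\max_i\|X_i\|^q\bigr)^{1/q}\Bigr].
\]
Using $\E\|\bar\zetav_i^\alpha\|^2\leq\E\|\zetav_i^\alpha\|^2$ produces the first bound of the lemma, and the crude bound $\E\max_i\|X_i\|^q\leq\sum_i\E\|X_i\|^q=\sum_i\E\bigl(\sum_\alpha c_\alpha\|\Sigma^{-1/2}\bar\zetav_i^\alpha\|^2\bigr)^{q/2}$ produces the third.

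The main bookkeeping obstacle I anticipate is the conversion from $\bar\zetav_i^\alpha$ back to $\zetav_i^\alpha$ inside the first and third terms: centering generates extra pieces of size $\sum_\alpha c_\alpha\|\Sigma^{-1/2}\E\zetav_i^\alpha\|^2$ which must be absorbed into the constant $C(q)$ or matched against the already-present second bound. Apart from this accounting, the argument is a clean three-step reduction --- centering, a conditional use of Corollary \ref{boni_cons} to strip off the Hermite polynomials in $\gammav$, and Rosenthal's inequality on the stacked independent sum in the enlarged space.
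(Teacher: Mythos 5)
Your proposal is correct and follows essentially the same route as the paper's proof: split off the mean term by the triangle inequality and bound it via Lemma \ref{boni}, apply that lemma conditionally on the $\zetav$-randomness to strip the Hermite polynomials, then use Rosenthal's inequality on the resulting sum in the stacked (the paper's ``hybrid'') norm and bound the maximum by the sum, absorbing the centering factor of $2$ into $C(q)$. Your write-up merely makes explicit the conditional application of the Hermite bound and the enlarged-space formulation that the paper leaves implicit.
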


\begin{proof}Without loss of generality assume $\Sigma = I$. Note that for a random variable $\xi$ operator $[\E \xi^q]^{1/q}$ is the probability norm $L_q$. Use notation 
\[
\delta = \left \| \normX{ \sum_{i, \alpha}  \zetav_i^\alpha H_\alpha(\gamma)  } \right \|_{L_q}.
\]
In order to construct a martingale one has to substitute expectation from each $X^{\alpha}$. 
 For that use the triangle inequality 
\[
\delta
\leq \left \| \normX{ \sum_{i, \alpha}  (\zetav_i^\alpha - \E[ \zetav_i^\alpha]) H_\alpha(\gamma)  } \right \|_{L_q}
+  \left \| \normX{ \sum_{i, \alpha}  \E [ \zetav_i^\alpha] H_\alpha(\gamma)  } \right \|_{L_q}.
\]
Without loss of generality assume that $\E[ \zetav_i^\alpha] = 0$, since $\| \zetav_i^\alpha - \E \zetav_i^\alpha \|_{L_q} \leq 2 \| \zetav_i^\alpha \|_{L_q}$.  Apply Lemma~\ref{boni}.
\[
\delta
\leq  \left\| \left(  \sum_{\alpha}  \alpha! (q-1)^{|\alpha|}  \normX{\sum_i \zetav_i^\alpha  }^2   \right)^{1/2}\right\|_{L_q} .
\]
Apply Lemma \ref{rose} for this hybrid norm and bound maximum by sum of components in power $q$.
\[  
\delta/ C(q) \leq  \left[  \sum_{i, \alpha}  \alpha! (q-1)^{|\alpha|} \E \normX{\zetav_i^\alpha}^2   \right]^{1/2} + \left[ \E \max_i \left(  \sum_{\alpha}  \alpha! (q-1)^{|\alpha|}  \normX{\zetav_i^\alpha}^2   \right)^{q/2} \right]^{1/q},
\]where
\[
   \max_i \left(  \sum_{\alpha}  \alpha! (q-1)^{|\alpha|}  \normX{\zetav_i^\alpha}^2   \right)^{q/2}  \leq 
   \sum_i  \left( \sum_{\alpha}  \alpha! (q-1)^{|\alpha|} \normX{\zetav_i^\alpha}^2  \right)^{q/2} . 
\]
\end{proof}

\begin{theorem} \label{WLT} Let $\gammav \in \ND(0,  \Sigma)$ and $\{\xiv_i\}_{i=1}^n$ be independent random vectors with zero mean and variance $\sum_i \E \xiv_i \xiv_i^T = \Sigma$  and with domain in $\R^p$. 
The Wasserstein distance between $\xiv =\sum_i \xiv_i$ and $\gammav$ with $\| x - y \|^L$ cost function, where $L \geq 2$, has the following upper bound 
\[
\frac{  W_L(\xiv, \gammav) }{\| \Sigma \|^{1/2}} \leq 
\frac{C \, L^{3/2} }{\log L } \left( \mu_4^{1/2} + \mu_{2L}^{1/L} \right) 
 + \frac{  \sqrt{2}  e^{1/2} L^{1/2} \mu_2 }{ \sqrt{n}} + \frac{ e^{1/2} L  \mu_3  }{2} \log(2n),
\]
where $C$ is some absolute constant and with $\xiv'_{i}$ independent copies of $\xiv_{i}$ 
\[
\mu_k =  \left\{ \sum_{i=1}^n \E \left \| \Sigma^{-1/2} (\xiv_{i} - \xiv'_{i}) \right \|^{k}\right\}.
\]
\end{theorem}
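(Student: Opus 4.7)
The plan is to apply Lemma~\ref{was_int_bound} with cost $c(x) = \|x\|^L$, which reduces the problem to a $t$-dependent moment bound:
\[
W_L(\xiv, \gammav) \;\le\; \int_{0}^{\infty} \Big(\E\|\nabla \mathcal{G}_t(X_t)\|^L\Big)^{1/L}\, dt.
\]
After a preliminary rescaling $\xiv \mapsto \Sigma^{-1/2}\xiv$ (which extracts the $\|\Sigma\|^{1/2}$ prefactor), everything hinges on bounding $(\E\|\nabla \mathcal{G}_t(X_t)\|^L)^{1/L}$ uniformly in $t$ with an integrable envelope. Using the explicit form \eqref{phi_expl_def} and decomposing $\gammav = \sum_{i=1}^n \gammav_i$ with independent $\gammav_i \sim \ND(0, \E\xiv_i\xiv_i^{\T})$, the integrand rewrites as
\[
\nabla \mathcal{G}_t(X_t) \;=\; -e^{-t}\sum_{i=1}^n \E\!\left[\xiv_i - \beta_t\gammav_i \,\Big|\, X_t\right],
\qquad \beta_t \eqdef \frac{e^{-t}}{\sqrt{1-e^{-2t}}}.
\]

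The central technical step is to expand each conditional expectation in multivariate Hermite polynomials $H_\alpha(X_t, \Sigma_t)$, $\Sigma_t = \Cov(X_t)$. This follows a Stein-type procedure: conditioning out the $i$-th summand $Y_{t,i} = e^{-t}\xiv_i + \sqrt{1-e^{-2t}}\gammav_i$, applying Gaussian integration by parts on the Gaussian part of the remainder $X_t^{(i)} = X_t - Y_{t,i}$, and Taylor-expanding the resulting score in $\xiv_i$ around $X_t^{(i)}$ yields a series $\sum_{\alpha} \zetav_i^{\alpha}\, H_\alpha(X_t, \Sigma_t)$. Because $\xiv_i$ and $\gammav_i$ share their first two moments, the $\alpha = 0$ and $|\alpha|=1$ terms cancel, so the series starts at $|\alpha|=2$, with the lowest-order coefficients controlled by third cumulants of $\xiv_i$, which after symmetrization are exactly $\E\|\Sigma^{-1/2}(\xiv_i - \xiv_i')\|^3$-type quantities.

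I would then feed this representation into Lemma~\ref{Hsum}, which produces three contributions to $(\E\|\nabla \mathcal{G}_t(X_t)\|^L)^{1/L}$: a centred-variance piece of order $\tfrac{CL}{\log L}\bigl(\sum_{i,\alpha}\alpha!(L{-}1)^{|\alpha|}\E\|\Sigma_t^{-1/2}\zetav_i^\alpha\|^2\bigr)^{1/2}$; a bias piece of order $\bigl(\sum_{\alpha}\alpha!(L{-}1)^{|\alpha|}\|\Sigma_t^{-1/2}\E\zetav^\alpha\|^2\bigr)^{1/2}$ (non-zero only for odd $|\alpha|$); and a Rosenthal maximum piece carrying the $L$-th power. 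The $|\alpha|=2$ contribution of the variance piece, after integrating the $e^{-t}$ prefactor over $t\in[0,\infty)$, yields the $\mu_4^{1/2}$ term with the $L^{3/2}/\log L$ prefactor; the Rosenthal maximum piece similarly yields the $\mu_{2L}^{1/L}$ term; the leading Gaussian-scale contribution yields the $\mu_2/\sqrt n$ term.

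The delicate piece is the bias integral, whose integrand behaves like $\beta_t e^{-t}\approx 1/\sqrt{2t}$ as $t\downarrow 0$. Truncating at $t\sim 1/n$ and controlling the short-time interval directly (on that scale the interpolation $X_t$ is essentially $\xiv$, so a crude third-moment bound suffices) produces the $\log(2n)$ factor multiplying $\mu_3$. The hardest part will be making the Stein/Hermite expansion fully rigorous while keeping the combinatorial weights $\alpha!(L{-}1)^{|\alpha|}$ sharp enough for the series to converge under the assumed sub-Gaussian moments, and carrying the constants through each of the three pieces so that exactly the announced exponents in $L$, $p$, $n$ emerge; in particular, verifying that the higher-order Hermite terms are absorbed by the $\mu_{2L}^{1/L}$ piece rather than inflating the leading-order constants.
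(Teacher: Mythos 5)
Your high-level architecture matches the paper's: Lemma~\ref{was_int_bound} to reduce to $\int_0^\infty (\E\|\nabla\mathcal{G}_t\|^L)^{1/L}dt$, a Stein-type Hermite expansion of the conditional expectation, Lemma~\ref{Hsum} to produce the three pieces, and a split of the time integral at $t_0\sim 1/n$ for the bias term. But there is a genuine gap at the crux of the argument. After the Hermite expansion the coefficient of $H_\alpha$ carries a factor $(e^{2t}-1)^{-|\alpha|/2}$, so the weighted series $\sum_\alpha \alpha!\,(L-1)^{|\alpha|}\|\zetav_i^\alpha\|^2$ that Lemma~\ref{Hsum} requires behaves term-by-term like $\bigl((L-1)\|\overline{\xiv}_i\|^2/(e^{2t}-1)\bigr)^{|\alpha|}$ and simply diverges for small $t$ (and for unbounded $\xiv_i$ at any fixed $t$). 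The paper's entire mechanism for defeating this is the $t$-dependent truncation $\Ind_i(t)=\Ind\bigl[\|\Sigma^{-1/2}(\xiv_i'-\xiv_i)\|\le((e^{2t}-1)/L)^{1/2}\bigr]$ built into the replacement $\xiv'(t)=\xiv+(\xiv_I'-\xiv_I)\Ind_I(t)$: the bound $\Ind_i(t)\le\|\overline{\xiv}_i\|^{-|\alpha|}((e^{2t}-1)/L)^{|\alpha|/2}$ exactly cancels the singular powers and converts the geometric weight $(L-1)^{|\alpha|}$ into a summable one, while the complementary event $1-\Ind_i(t)$ generates the residual $\alpha=0$ and $|\alpha|=1$ terms. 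Your proposal has no analogue of this device; you explicitly defer ``keeping the combinatorial weights sharp enough for the series to converge'' and ``verifying that the higher-order Hermite terms are absorbed'' as the hardest part, but that is precisely the step the theorem's proof lives or dies on. Relatedly, your claim that the $\alpha=0$ and $|\alpha|=1$ terms cancel exactly by moment matching is only true in the untruncated expansion, which is the one that does not converge; in the paper these orders survive as truncation residuals and are what feed the $\mu_2/\sqrt{n}$ and $\mu_3\log(2n)$ terms.

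Two further points. First, you propose Hermite polynomials $H_\alpha(X_t,\Sigma_t)$ evaluated at $X_t$ itself; Lemma~\ref{boni} (and its use inside Lemma~\ref{Hsum}) needs the polynomials evaluated at a Gaussian vector independent of the coefficients $\zetav_i^\alpha$, which is why the paper writes the expansion in $H_\alpha(\gammav,\Sigma)$ with $\gammav$ the coupled Gaussian — evaluating at $X_t$ would break the orthogonality that the moment bound relies on. Second, your explanation of the $\log(2n)$ factor is internally inconsistent: an integrand of order $1/\sqrt{2t}$ near $t=0$ is integrable and produces no logarithm; the logarithm comes from the bias bound of order $e^{-t}/(e^{2t}-1)\sim 1/(2t)$, against which the paper plays a second, integrable bound of order $L^{1/2}\mu_2\,e^{-t}/(e^{2t}-1)^{1/2}$ on $[0,t_0]$ to produce the $\mu_2/\sqrt{n}$ term. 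These are fixable details, but together with the missing truncation they mean the proposal as written would not close.
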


\begin{proof} 
According to the base procedure of the Stein's method \cite{stein_meth} we should consequently replace random vectors $\xiv_i$  by their independent copies  $\xiv'_{i}$ trowing random index at each step. It allow to estimate change of some function of interest that depends on the transition process $X_t$ (\ref{tranz_def}) under condition that $\mu_t$ is stationary measure. In our case the function is $ \nabla \mathcal{G}_t(X_t)$ (\ref{phi_expl_def}). For a formal description   
involve two additional random vectors with random uniform index $I \in \{1,\ldots,n\}$
\[
\xiv'(t) = \xiv + (\xiv'_{I} - \xiv_{I}) \Ind_I(t),
\]
where
\[
\Ind_I(t) = \Ind \left[ \| \Sigma^{-1/2} ( \xiv'_{I}- \xiv_{I}) \| \leq \left( \frac{ e^{2t}-1 }{L} \right)^{1/2} \right],
\]
and
\[
\tauv(t)  = \E \left [ 
  \frac{n}{2} (\xiv' - \xiv) \left( 1 + \sum_{\alpha \geq 0} \frac{(\xiv' - \xiv)^\alpha H_{\alpha}(\gammav, \Sigma)}{\alpha! (e^{2t} - 1)^{|\alpha| / 2}} \right)
  \bigg | \xiv, \gammav
\right].
\]
We restrict term  $\|  \Sigma^{-1/2} (\xiv'_{I}- \xiv_{I}) \|$ from the upper side in order to prevent irregularity in random vector $\tauv(t)$ when $t \to 0$. Note that for any smooth function $f$ by construction of $\xiv'(t)$ (ref. Lemma 10 in \cite{Bonis})
\[
\E \tauv(t) f(X_t) = n \E  \left(X'_t - X_t \right) \frac{ f(X'_t) + f(X_t) }{2}  = 0,
\]
where $X'_t = e^{-t} \xiv'(t) + \sqrt{1 - e^{-2t}} \gammav$. And as a consequence $\E(\tauv (t)  | X_t) = 0$. So one may note that $\tauv(t)$ shifts process $X_t$ without changing the its measure. Oppositely $ \nabla \mathcal{G}_t(X_t) $ shifts $X_t$ outside its measure towards $\gammav$. On order to reduce the unconditional expectation of $ \nabla \mathcal{G}_t(X_t) $  we will substitute $\tauv(t)$ from it.        
Accounting to the definition of $\nabla \mathcal{G}_t(X_t)$ (\ref{phi_expl_def}) we obtain 
\[
 \nabla \mathcal{G}_t(X_t) =  \nabla \mathcal{G}_t(X_t) -  e^{-t} \E(\tauv (t)  | X_t) = -   e^{-t} \E \left( \xiv - \frac{ 1 }{\sqrt{e^{2t} - 1}} \gammav + \tauv(t) \bigg| X_t \right).
\]We have just added  zero component to the initial value of  $ \nabla \mathcal{G}_t(X_t) $. Below we will use Jensen's inequality moving the conditional expectation outside the norm and $\tauv(t)$  will decrease the expected value of the norm. Add notation $\Sigma_i = \E \xiv_i \xiv_i^T$ with evident property 
$
 \sum_{i=1}^n \Sigma_i = \Sigma.
$
Use a short notation $\E_{X_t}$ for the expectation operator with $X_t$ condition.  Unwrap the last expression with $ \nabla \mathcal{G}_t(X_t)$
\[
 \E \left( \xiv - \frac{ 1 }{\sqrt{e^{2t} - 1}} \gammav + \tauv(t) \bigg| X_t \right)
\]\begin{equation} 
\label{phi_plus_tau}
\begin{split}
& = \sum_{i=1}^n  \E_{X_t}  (\xiv_{i} - \xiv'_{i}) (1 - \Ind_i(t)) \\
 & + \frac{1}{\sqrt{e^{2t} - 1}} \E_{X_t}  \sum_{i=1}^n \left( \frac{(\xiv'_{i} - \xiv_{i}) (\xiv'_{i} - \xiv_{i})^T  \Ind_i(t)   }{2} - \Sigma_i \right) \Sigma^{-1} \gammav \\
 & + \sum_{\alpha \geq 2} \frac{1}{2 \alpha! (e^{2t} - 1)^{|\alpha|/2}} \E_{X_t} H_\alpha(\gammav, \Sigma) \sum_{i=1}^n  (\xiv'_{i} - \xiv_{i}) (\xiv'_{i} - \xiv_{i})^{\alpha}  \Ind_i(t).    
\end{split}
\end{equation}Use notations from Lemma \ref{Hsum} and set 
\[
\zetav_i^0 =   (\xiv_{i} - \xiv'_{i}) (1 - \Ind_i(t)).
\]For $|\alpha| = 1$
\[
\zetav_i^\alpha = \frac{1}{\sqrt{e^{2t} - 1}}  \left( \frac{(\xiv'_{i} - \xiv_{i}) (\xiv'_{i} - \xiv_{i})^\alpha  \Ind_i(t)   }{2} -   \frac{ \E (\xiv'_{i} - \xiv_{i}) (\xiv'_{i} - \xiv_{i})^\alpha    }{2}  \right).
\]
For $|\alpha| > 1$
\[
\zetav_i^{\alpha} =   \frac{1}{2 \alpha! (e^{2t} - 1)^{|\alpha| /2}}  (\xiv'_{i} - \xiv_{i})  (\xiv'_{i} - \xiv_{i})^{\alpha}  \Ind_i(t) .   
\]
Next we want no estimate the power $L$ of $ \sum_{i, \alpha}  \zetav_i^\alpha H_\alpha(\gamma, \Sigma)  $, $\alpha \in \N$. Define a normalized vector 
\[
\overline{\xiv}_i = \Sigma^{-1/2} (\xiv'_{i} - \xiv_{i}).
\] 
In the expressions below we are going to use the following inequalities $\forall \alpha > 0$
\[
\Ind_{i}(t) \leq \frac{1}{\| \overline{\xiv}_i  \|^{| \alpha |} }  \left(\frac{e^{2t} - 1}{L} \right)^{| \alpha | / 2},
\quad 1 - \Ind_{i}(t) \leq \| \overline{\xiv}_i  \|^{| \alpha | } \left(\frac{L}{e^{2t} - 1} \right)^{ | \alpha | / 2}.
\]Note that $\forall l > 0$
\[
\| \zetav_i^0 \| \leq  \| \xiv'_{i} - \xiv_{i} \| \| \overline{\xiv}_i  \|^{l} \left(\frac{L}{e^{2t} - 1} \right)^{l/2},
\]
\[
\sum_{|\alpha| = 1 } \|\zetav_i^\alpha \|^2 \leq  \frac{ \| \xiv'_{i} - \xiv_{i} \|^2 \| \overline{\xiv}_i  \|^2     }{4 (e^{2t} - 1)}  +  \frac{  (\E \| \xiv'_{i} - \xiv_{i} \| \| \overline{\xiv}_i  \|)^2 }{4 (e^{2t} - 1)}, 
\]
\[
\sum_{|\alpha| = m } \| \zetav_i^{\alpha}\|^2 =   \frac{1}{(2 m!)^2 L^{l} (e^{2t} - 1)^{(m-l)}}  \| \xiv'_{i} - \xiv_{i} \|^2   \|  \overline{\xiv}_i \| ^{2 (m - l)}.  
\]Then setting $l = m - 1$ one gets 
\begin{equation*}
\sum_{\alpha \geq 0 }^{\infty}  \alpha ! (L-1)^{|\alpha|} \| \zetav_i^\alpha \|^2 \leq \frac{L}{e^{2t} -1} 
\left( \left(1 + \frac{e}{4} \right) \| \xiv'_{i} - \xiv_{i} \|^2  \|  \overline{\xiv}_i \|^2 + \frac{  (\E \| \xiv'_{i} - \xiv_{i} \|  \|  \overline{\xiv}_i \|)^2 }{4} \right).
\end{equation*}Apply expectation operator and make summation by $i$ 
\begin{equation}
\label{chast1}
 \left[  \sum_{i, \alpha}  \alpha ! (L-1)^{| \alpha |} \E \| \zetav_i^\alpha \|^2   \right]^{1/2} 
 \leq   \frac{(2L)^{1/2}}{(e^{2t} -1)^{1/2}} 
\left(  \sum_{i=1}^n \E \| \xiv'_{i} - \xiv_{i} \|^2  \|  \overline{\xiv}_i \|^2  \right)^{1/2}.
\end{equation}Analogical expression holds in power $L/2$:  
\begin{equation}\label{chast2}
\left[ \sum_i \E \left( \sum_{\alpha \geq 0}  \alpha! (L-1)^{|\alpha|} \| \zetav_i^\alpha \|^2  \right)^{L/2}  \right]^{1/L}
\leq   \frac{(2L)^{1/2}}{(e^{2t} -1)^{1/2}} 
\left(  \sum_{i=1}^n \E \| \xiv'_{i} - \xiv_{i}\|^{L}  \|  \overline{\xiv}_i \|^L  \right)^{1/L}.
\end{equation}For the other part of bound in Lemma \ref{Hsum} one has to evaluate term  $\| \sum_i \E \zetav_i^\alpha \|^2$. Note that 
\[
\sum_\alpha \left \| \sum_i \E \zetav_i^\alpha \right \|^2 = \sum_{i,j} \sum_\alpha  (\E \zetav_i^\alpha)^T \E X_j^\alpha  
\]
and $\forall l > 0$:
\[
\sum_{|\alpha| = 1 }   (\E \zetav_i^\alpha)^T \E X_j^\alpha   \leq  \frac{ \E \| \xiv'_{i} - \xiv_{i} \|  \|  \overline{\xiv}_i \|^{1 + l} \E \| \xiv'_{j} - \xiv_{j} \|  \|  \overline{\xiv}_i \|^{1 + l}     }{4} \frac{L^{l}}{(e^{2t} - 1)^{l+1}}  
\]and for $m >1$:
\[
\sum_{|\alpha| = m }   (\E \zetav_i^\alpha)^T \E X_j^\alpha \leq   \frac{
\E  \| \xiv'_{i} - \xiv_{i} \|  \|  \overline{\xiv}_i \| ^{ (m - l)} \E \| \xiv'_{j} - \xiv_{j} \|  \|  \overline{\xiv}_i \| ^{ (m - l)} 
}{(2 m!)^2 L^{l} (e^{2t} - 1)^{(m-l)}}.  
\]
Then setting $l = m - 2$ one obtains
\begin{equation}\label{chast3}
 \left[  \sum_{\alpha \geq 0}  \alpha! (L-1)^{|\alpha|}  \| \E \zetav^\alpha \|^2   \right]^{1/2} 
 \leq   \frac{e^{1/2} L}{e^{2t} -1} 
  \sum_{i=1}^n \E \| \xiv'_{i} - \xiv_{i} \|  \|  \overline{\xiv}_i \|^2.
 \end{equation}
Moreover, setting $l = m - 1$ we additionally derive that   \begin{equation}\label{chast4}
  \left[  \sum_{\alpha \geq 0}  \alpha! (L-1)^{|\alpha|}  \| \E \zetav^\alpha \|^2   \right]^{1/2} 
 \leq   \frac{e^{1/2} L^{1/2}}{(e^{2t} -1)^{1/2}} 
  \sum_{i=1}^n \E \| \xiv'_{i} - \xiv_{i}\|  \|  \overline{\xiv}_i \| 
  = \frac{ \| \Sigma \|^{1/2} \mu_2 e^{1/2} L^{1/2}}{(e^{2t} -1)^{1/2}}. 
 \end{equation}
 The last term requires two bounds because integration of function  $e^{-t}/(e^{2t} -1)$ diverges near zero point. We just have shown above in equation (\ref{phi_plus_tau}) that 
 \[
  \nabla \mathcal{G}_t(X_t)  = -e^{-t}  \sum_{i, \alpha} \zetav^\alpha_i H_\alpha(\gammav).
 \]
 From Lemma \ref{Hsum} and expressions (\ref{chast1}), (\ref{chast2}), (\ref{chast3}), (\ref{chast4}) follows that 
 \begin{align*}
 \frac{ \left[ \E \| \nabla \mathcal{G}_t(X_t) \|^L  \right]^{1/L} } {\| \Sigma \|^{1/2}} &\leq  \frac{C \, L^{3/2} e^{-t}}{\log(L) (e^{2t} -1)^{1/2}} \left( \mu_4^{1/2}  
 + \mu_{2L}^{1/L} \right) \\
 & + \min \left[  \frac{e^{1/2} L e^{-t}}{e^{2t} -1}  \mu_3, \,\,  \frac{ e^{1/2} L^{1/2} e^{-t}}{(e^{2t} -1)^{1/2}} \mu_2  \right].
 \end{align*}
 Now in order to obtain the bound for $W_L(\xiv, \gammav)$ by means of Lemma \ref{was_int_bound} one has to integrate the previous expression by $t$. Apply the following integrals:
\[
\int_0^{\infty} \frac{e^{-t}}{(e^{2t} -1)^{1/2}}dt  = 1 .
\]For $t_0 = 1/n$

\begin{align*}
& A \int_0^{t_0} \frac{e^{-t}}{(e^{2t} -1)^{1/2}}dt  + B \int_{t_0}^{\infty} \frac{e^{-t}}{e^{2t} -1}dt  \\
& = A(1 - e^{-2t_0})^{1/2}
+ B \left( -e^{-t_0} + \frac{1}{2} \log \frac{e^{-t_0} + 1}{ 1 - e^{-t_0}} \right) \\
& \leq \frac{\sqrt{2} A}{ \sqrt{n}} + \frac{B}{2} \log(2n).
\end{align*}
Set $A =  e^{1/2} L^{1/2} \mu_2$ and $B = e^{1/2} L \mu_3$, then finally
 \[
  \int_0^{\infty} \frac{ \left[ \E \|\nabla \mathcal{G}_t(X_t) \|^L  \right]^{1/L} }{\| \Sigma \|^{1/2}} dt \leq 
 \frac{C \, L^{3/2} }{\log L } \left( \mu_4^{1/2} + \mu_{2L}^{1/L} \right) 
 + \frac{ \sqrt{2}   e^{1/2} L^{1/2}  \mu_2 }{ \sqrt{n}} + \frac{ e^{1/2} L  \mu_3  }{2} \log(2n).
 \]
\end{proof}

\noindent Consider a random vector \(\xiv\) which has restricted exponential or sub-Gaussian moments: $ \exists \; \gm>0$
\begin{equation}
    \log \E \exp\bigl( \gammav^{\T}\xiv\bigr)
    \le
    \| \gammav \|^{2}/2,
    \quad
    \gammav \in \R^{\dimp}, \quad \| \gammav \| \le \gm 
\label{expgamgm}.
\end{equation}
For ease of presentation, assume below that \( \gm \) is sufficiently large.

\begin{lemma}[\cite{spokoiny2017}]
\label{LLbrevelocro}   
Define $\xxc = \gm^{2}/4$.
Let \eqref{expgamgm} hold and 
\( 0.3 \gm \ge \sqrt{\dimp} \).
Then
for each \( \xx > 0 \)
\[
    \P\bigl( \|\xiv\| \ge \zq(\dimp,\xx) \bigr)
    \leq
    2 \ex^{-\xx} + 8.4 \ex^{-\xxc } \Ind(\xx < \xxc) ,
\]   
where \( \zq(\dimp,\xx) \) is defined by
\[
\label{PzzxxpBro}
    \zq(\dimp,\xx)
     =  
    \begin{cases}
        \bigl( \dimp + 2 \sqrt{\dimp \xx} + 2 \xx\bigr)^{1/2}, &  \xx \le \xxc \\
        \gm + 2 \gm^{-1} (\xx - \xxc)   , & \xx > \xxc .
    \end{cases}
\]    
\end{lemma}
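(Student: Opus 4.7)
The plan is to apply a Chernoff-type argument to $\|\xiv\|^{2}$ after linearising the quadratic via Gaussian integration. Specifically, for an independent auxiliary vector $\gv \sim \ND(0, I_{\dimp})$ one has the identity
\[
\exp\!\bigl( \tfrac{1}{2} \lambda \| \xiv \|^{2} \bigr)
=
\E_{\gv} \exp\!\bigl( \sqrt{\lambda}\, \gv^{\T} \xiv \bigr),
\qquad \lambda \in (0,1).
\]
Taking expectation in $\xiv$, swapping order of integration, and applying the sub-Gaussian assumption \eqref{expgamgm} to the inner MGF on the event $\{\sqrt{\lambda} \| \gv \| \le \gm\}$ produces, modulo the truncation, the bound $\E \exp(\tfrac{1}{2}\lambda \|\xiv\|^{2}) \le (1-\lambda)^{-\dimp/2}$. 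This is the workhorse estimate.

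The first regime $\xx \le \xxc$ is obtained by minimising the Chernoff tail
\[
\P\bigl( \| \xiv \| \ge z \bigr)
\le
\exp\!\bigl( - \tfrac{1}{2} \lambda z^{2} \bigr)\, (1-\lambda)^{-\dimp/2}
\]
over $\lambda \in (0,1)$ subject to the admissibility constraint $\sqrt{\lambda}\,\gm \ge $ (a multiple of) the typical norm of $\gv$, which is $\sqrt{\dimp}$; here the assumption $0.3 \gm \ge \sqrt{\dimp}$ enters to guarantee that the optimal $\lambda$ is admissible. A direct computation with $\lambda = 1 - \dimp / z^{2}$ gives exactly the Gaussian quantile $\zq(\dimp,\xx) = (\dimp + 2 \sqrt{\dimp \xx} + 2 \xx)^{1/2}$ with the leading factor $2 \ex^{-\xx}$.

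The second regime $\xx > \xxc$ is the one where the sub-Gaussian restriction becomes binding: the Chernoff parameter is pinned at the boundary value $\lambda = \gm^{2}/z^{2}$, and the exponent becomes linear in $z$. Solving $\tfrac{1}{2}(\gm/z)^{2} z^{2} - \tfrac{\dimp}{2} \log(1 - \gm^{2}/z^{2}) = \xx$ to first order around $\xx = \xxc$ yields the affine quantile $\zq(\dimp,\xx) = \gm + 2 \gm^{-1}(\xx - \xxc)$. The additive correction $8.4\, \ex^{-\xxc}\Ind(\xx < \xxc)$ absorbs the tail contribution from the complement event $\{\sqrt{\lambda}\|\gv\| > \gm\}$ in the Gaussian linearisation, bounded via a standard Gaussian deviation estimate with the numerical constant fixed by the hypothesis $0.3\gm \ge \sqrt{\dimp}$.

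The main technical obstacle is bookkeeping of the truncation: one must carefully decompose
\[
\E_{\gv} \exp\!\bigl( \sqrt{\lambda}\, \gv^{\T} \xiv \bigr)
=
\E_{\gv}\!\bigl[\,\cdot\,;\ \sqrt{\lambda}\| \gv \| \le \gm \bigr]
+
\E_{\gv}\!\bigl[\,\cdot\,;\ \sqrt{\lambda}\| \gv \| > \gm \bigr],
\]
bound the first term by the clean MGF estimate $(1-\lambda)^{-\dimp/2}$, and control the second term by a Cauchy--Schwarz split combined with a Gaussian tail bound on $\|\gv\|$, then verify that at the transition point $\xx = \xxc$ the two quantile branches match continuously. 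Since this is exactly the statement proved in \cite{spokoiny2017}, we refer the reader there for the full numerical constants; the above sketch is only to make the structure transparent.
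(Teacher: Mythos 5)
The paper does not prove this lemma: it is imported verbatim from \cite{spokoiny2017} and used as a black box (its only role here is to feed the moment bound of Lemma \ref{subg_moments}), so there is no internal proof to compare your sketch against. Your reconstruction of the argument from the cited source has the right overall architecture --- Gaussian linearisation of the quadratic form, a Chernoff bound with the optimal parameter \( \lambda = 1 - \dimp/z^{2} \) in the moderate-deviation zone, and a pinned parameter producing a quantile linear in \( \xx \) once the constraint binds --- and this is indeed how such bounds are obtained in Spokoiny's work.

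However, the truncation step as you have written it does not close, and it is precisely the step you yourself flag as the main technical obstacle. After exchanging \( \E_{\xiv} \) and \( \E_{\gv} \), the term \( \E_{\gv}\bigl[\E_{\xiv}\exp(\sqrt{\lambda}\,\gv^{\T}\xiv);\ \sqrt{\lambda}\|\gv\|>\gm\bigr] \) involves the moment generating function of \( \xiv \) at arguments outside the admissible ball \( \{\|\lambdav\|\le\gm\} \); hypothesis \eqref{expgamgm} says nothing there, and no Cauchy--Schwarz split in \( \gv \) can recover control of a possibly infinite \( \xiv \)-integral. The correct bookkeeping truncates \( \|\xiv\| \) itself (one works with \( \E[\exp(\lambda\|\xiv\|^{2}/2)\,\Ind(\|\xiv\|\le R)] \) for a radius \( R \) tied to \( \gm/\sqrt{\lambda} \)) and handles the complementary large-deviation zone by a separate exponential bound with the tilting parameter on the boundary of the admissible set; that separate bound is where both the affine branch of \( \zq(\dimp,\xx) \) and the additive \( 8.4\,\ex^{-\xxc}\Ind(\xx<\xxc) \) term actually originate, not from the Gaussian tail of the auxiliary \( \gv \) as you suggest. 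Likewise, ``solving to first order around \( \xx=\xxc \)'' is not how the affine quantile is derived; it falls out exactly from the boundary Chernoff exponent. Since the lemma is cited rather than proved in this paper, none of this affects the paper itself, but as a standalone proof your sketch would need that decomposition repaired.
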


\begin{lemma}\label{subg_moments}

With conditions from Lemma \ref{LLbrevelocro} for each $k \geq 2$
\[
   \E \left \| \xiv  \right \|^k \leq 4 (\sqrt{p} + \sqrt{2k})^{k}.
\]
\end{lemma}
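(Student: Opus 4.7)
The plan is to convert the tail bound of Lemma~\ref{LLbrevelocro} into a moment bound via the identity
\[
\E \| \xiv \|^{k} \;=\; \int_{0}^{\infty} k\, t^{k-1}\, \P\bigl(\|\xiv\| > t\bigr)\, dt,
\]
split the integration range at the critical level \(t^{*} = \sqrt{p} + \sqrt{2k}\), and handle the two branches of \(\zq(p,x)\) separately.

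\textbf{Step 1 (small values).} On \([0, t^{*}]\) use the trivial bound \(\P(\|\xiv\| > t) \le 1\), which contributes exactly \((t^{*})^{k} = (\sqrt{p} + \sqrt{2k})^{k}\). This is already one quarter of the target, so the remaining tail integral must be shown to be at most \(3(\sqrt{p} + \sqrt{2k})^{k}\).

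\textbf{Step 2 (moderate deviations, first branch).} For \(t \ge t^{*}\) with \(t \le \sqrt{p} + \sqrt{2x_{c}}\), change variable \(t = \sqrt{p} + \sqrt{2x}\), which corresponds to \(x \ge k\) and \(x \le x_{c}\). Lemma~\ref{LLbrevelocro} gives \(\P(\|\xiv\|>t) \le 2 e^{-x} + 8.4 \, e^{-x_{c}}\). The key algebraic inequality is
\[
\sqrt{p} + \sqrt{2x} \;\le\; \sqrt{x/k}\, \bigl(\sqrt{p} + \sqrt{2k}\bigr), \qquad x \ge k,
\]
which follows since \(\sqrt{x/k} \ge 1\) multiplies the \(\sqrt{p}\) term harmlessly and is exact for the \(\sqrt{2x}\) term. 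Using this to factor out \((\sqrt{p}+\sqrt{2k})^{k-1}\) reduces the tail integral to
\[
C\,(\sqrt{p}+\sqrt{2k})^{k-1} k^{-(k-1)/2} \int_{k}^{\infty} x^{(k-2)/2}\, e^{-x}\, dx,
\]
and the remaining integral is controlled by \(\Gamma(k/2) \le (k/2)^{k/2}\). Some arithmetic, together with \(\sqrt{p}+\sqrt{2k} \ge \sqrt{2k}\), reduces the bound to \(\textrm{const}\cdot (\sqrt{p}+\sqrt{2k})^{k}\) with a constant below \(3\) uniformly in \(k \ge 2\). The contribution of the \(8.4\, e^{-x_{c}}\) term is similarly absorbed because \(x_{c} = \gm^{2}/4\) is at least of order \(p\) under the assumption \(0.3\gm \ge \sqrt{p}\), so \(e^{-x_{c}}\) is negligibly small compared to the polynomial prefactor.

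\textbf{Step 3 (large deviations, second branch, and conclusion).} For \(t > \sqrt{p} + \sqrt{2x_{c}}\), \(\zq(p,x) = \gm + 2\gm^{-1}(x-x_{c})\) gives an exponential (rather than Gaussian) tail; under the same change of variable the integrand decays like \(e^{-\gm(t-\gm)/2}\) and its contribution is exponentially small in \(\gm^{2} \sim x_{c}\), hence again absorbed. Adding the three pieces yields the claim. \textbf{Main obstacle.} The delicate point is not any single estimate but the bookkeeping: one has to verify that the constants in Steps 2 and 3 indeed combine to at most \(3\) for \emph{every} \(k \ge 2\), which forces a careful uniform handling of the Stirling-type bound \(k^{1-k/2}\Gamma(k/2) \le \textrm{const}\) and of the regime where \(k\) approaches or exceeds \(x_{c}\), where the Gaussian branch ceases to be available.
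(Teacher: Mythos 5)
Your proof is correct and follows essentially the same route as the paper's: both convert the tail bound of Lemma \ref{LLbrevelocro} into a moment bound by tail integration, split the range according to the two branches of \( \zq(\dimp,\xx) \), and exploit the fact that the Gaussian-branch integrand peaks near \( \xx = k \) (you via the factoring inequality \( \sqrt{p}+\sqrt{2\xx}\le\sqrt{\xx/k}\,(\sqrt{p}+\sqrt{2k}) \) and a Gamma-function estimate, the paper via integration by parts and a sup bound on \( (\sqrt{p}+y)^{k}e^{-y^{2}/4} \)). The only cosmetic difference is the split point (\( \sqrt{p}+\sqrt{2k} \) versus the paper's \( \sqrt{p} \)); both arguments rely equally on the standing assumption that \( \gm \) is large enough that \( k \le \xxc \), which you rightly flag.
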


\begin{proof} First  separate events $\{  \| \xiv   \| \leq \sqrt{p}  \} $ and $\{  \| \xiv   \| > \sqrt{p}  \} $. Then we obtain
 \[
\E \| \xiv \|^k   \leq  p^{k/2} +  k \int_{\sqrt{p}}^{\infty} t^{k-1} \P (\| \xiv \| > t) dt. 
\]
Apply Lemma \ref{LLbrevelocro} that is valid in the region $\{  t > \sqrt{p}  \} $ 
\begin{align*}
&  \int_{\sqrt{p}}^{\infty} t^{k-1} \P (\| \xiv \| > t) dt \\
& \leq 2 \int_{0}^{\infty} z(p, \xx)^{k-1} (\ex^{-\xx}) z'(p, \xx) d\xx  \\
& + 8.4 e^{-\xxc} \int_{0}^{\xxc} z(p, \xx)^{k-1} d z(p, \xx).
\end{align*}Through integration by parts we find that 
\[
k \int_{0}^{\infty} z(p, \xx)^{k-1} (\ex^{-\xx}) z'(p, \xx) d\xx = -z^k(p, 0) + \int_{0}^{\infty} z(p, \xx)^{k} (\ex^{-\xx})  d\xx .
\]The function $z(p, \xx)$ behaves differently before and after the point $\xxc$.  So we will integrate it separately. 
\begin{align*}
& \int_{0}^{\xxc} z(p, \xx)^{k} (\ex^{-\xx})  d\xx  = \int_{0}^{\xxc} (\sqrt{p} + \sqrt{2x} )^{k} (\ex^{-\xx})  d\xx \\
& \leq \int_{0}^{\infty} (\sqrt{p} + y)^k e^{- \frac{y^2}{4}}  e^{-\frac{y^2}{4}} d \frac{y^2}{2} \\
& \leq \{ \text{set } y = \argmax (\sqrt{p} + y)^k e^{- \frac{y^2}{4}}  \} \leq \frac{  (\sqrt{p} + \sqrt{2k})^k }{2}  \int_{0}^{\infty}  e^{- \frac{y^2}{4}} d \frac{y^2}{2} \\
&  \leq   (\sqrt{p} + \sqrt{2k})^k  .
\end{align*}
\begin{align*}
& \int_{\xxc}^{\infty} z(p, \xx)^{k} (\ex^{-\xx})  d\xx  = \int_{\xxc}^{\infty} \left(g + \frac{2}{g} (\xx - \xxc) \right)^{k} (\ex^{-\xx})  d\xx \\
& = e^{-\xxc} \int_{0}^{\infty}   \left(g + \frac{2}{g} y \right)^{k} e^{-\frac{y}{2}} e^{-\frac{y}{2}} dy \\
& \leq \left \{  \left(g + \frac{2}{g} y \right)^{k} e^{-\frac{y}{2} } \leq \max  \left \{g, \frac{4k}{g}  \right\}^{k} \right \} \leq  2 e^{-\xxc}  \max \left\{ g,   \frac{4k}{g}  \right\}^k  \\
& \leq 2 e^{-k/2} (k)^{k/2}.
\end{align*}Compute the last integral part: 
\[
8.4 e^{-\xxc} k  \int_{0}^{\xxc} z(p, \xx)^{k-1} d z(p, \xx)  = 8.4 e^{-\xxc} (\sqrt{p} + \sqrt{2\xxc})^k \leq (\sqrt{p} + \sqrt{k})^k.
\]


\end{proof}

\noindent Using the last lemma one may evaluate moments in Theorem \ref{WLT}. 

\begin{theorem} \label{WLTsubG} Under conditions from Theorem \ref{WLT} and additional condition that $ \forall i: \, \frac{\sqrt{n}}{\nunu} \Sigma^{-1/2} \xiv_i $ is sub-Gaussian (\ref{expgamgm})

\[
W_L(\xiv, \gammav) \leq 
\frac{C \, L^{3/2} \nunu^2 \| \Sigma \|^{1/2} }{\log L } \left( \frac{p}{\sqrt{n}} + \frac{L}{n^{1 - 1/L}} \right)  +  \frac{5 L \nunu^3 p^{3/2} \| \Sigma \|^{1/2} \log(2n) }{\sqrt{n}},
\] for some absolute constant $C$.

\end{theorem}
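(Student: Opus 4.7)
The plan is to obtain Theorem \ref{WLTsubG} as a direct corollary of Theorem \ref{WLT} by substituting moment bounds derived from the sub-Gaussian hypothesis via Lemma \ref{subg_moments}. The only nontrivial work is bookkeeping: estimating each $\mu_k$ in the statement of Theorem \ref{WLT} and then grouping the four resulting contributions to match the two displayed terms in the conclusion.

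First I would bound $\mu_k = \sum_{i=1}^n \E\|\Sigma^{-1/2}(\xiv_i - \xiv'_i)\|^k$ for $k \in \{2,3,4,2L\}$. By the triangle inequality and the independent-copies structure, $\E\|\Sigma^{-1/2}(\xiv_i - \xiv'_i)\|^k \le 2^k \, \E\|\Sigma^{-1/2}\xiv_i\|^k$, and the assumption that $\frac{\sqrt{n}}{\nunu}\Sigma^{-1/2}\xiv_i$ is sub-Gaussian in the sense of \eqref{expgamgm} puts us exactly in the setting of Lemma \ref{subg_moments}, yielding
\[
\E\|\Sigma^{-1/2}\xiv_i\|^k \le \frac{4\nunu^k}{n^{k/2}}\bigl(\sqrt{p}+\sqrt{2k}\bigr)^k.
\]
Summing over $i$ gives $\mu_k \le 8 (2\nunu)^k n^{1-k/2}(\sqrt{p}+\sqrt{2k})^k$. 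Specializing produces $\mu_2 \lesssim \nunu^2 p$, $\mu_3 \lesssim \nunu^3 p^{3/2}/\sqrt{n}$, $\mu_4^{1/2} \lesssim \nunu^2 p/\sqrt{n}$, and $\mu_{2L}^{1/L} \lesssim \nunu^2 (\sqrt{p}+\sqrt{4L})^{2}/n^{1-1/L} \lesssim \nunu^2 (p+L)/n^{1-1/L}$, where in the last step I use $(a+b)^2 \le 2(a^2+b^2)$ to separate $p$ and $L$.

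Next I would plug these estimates into the bound of Theorem \ref{WLT}. The $\mu_4^{1/2}$ contribution becomes $\tfrac{CL^{3/2}}{\log L}\cdot\tfrac{\nunu^2 p}{\sqrt n}$, which is absorbed into the first displayed term after noting the $p/\sqrt n$ piece already dominates. The $\mu_{2L}^{1/L}$ contribution, after splitting $p + L$, contributes both the $p/\sqrt{n}$ (since $n^{1-1/L}\ge \sqrt n$) and the $L/n^{1-1/L}$ parts of the first bracket. The $\mu_2$ term gives $\tfrac{L^{1/2}\nunu^2 p}{\sqrt n}$, again of lower order than the first bracket. Finally the $\mu_3$ term produces $\tfrac{L \nunu^3 p^{3/2} \log(2n)}{\sqrt n}$ up to the absolute constant $e^{1/2}/2$, which with a loose numerical bound fits under the claimed prefactor $5$.

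The only real obstacle is keeping the numerical constants clean: the factor of $5$ in front of the second term forces one to check that $\tfrac{1}{2}e^{1/2}$ times the constant coming from Lemma \ref{subg_moments} and the triangle inequality together remain below $5$, and that the $\mu_4^{1/2}$ and $\mu_2$ pieces really can be absorbed into the $L^{3/2}\nunu^2 p/\sqrt n$ term without inflating the generic constant $C$. Beyond this bookkeeping there is no new probabilistic content — the theorem is really a rewriting of Theorem \ref{WLT} under the sub-Gaussian normalization, with Lemma \ref{subg_moments} providing the dimension dependence $p^{k/2}$ that drives the final $p$ and $p^{3/2}$ powers.
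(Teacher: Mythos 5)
Your proposal is correct and is exactly the route the paper intends: the paper gives no written proof of Theorem \ref{WLTsubG} beyond the remark ``Using the last lemma one may evaluate moments in Theorem \ref{WLT}'', and your bookkeeping — bounding each $\mu_k$ via the triangle inequality and Lemma \ref{subg_moments}, then substituting into Theorem \ref{WLT} and absorbing the $\mu_2$ and $\mu_4^{1/2}$ contributions into the first bracket — is precisely that evaluation. Your caveat about the explicit prefactor $5$ on the $\mu_3$ term is well taken (a crude $2^k$ triangle-inequality constant does not obviously yield it), but that is a looseness in the paper's stated constant rather than a gap in your argument.
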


\begin{remark} The main advantage of this theorem is explicit and sharp dependence on parameters $n$, $p$ and $L$.  
 A simplified asymptotic of this upper bound is 
\[
O\left( \frac{ L^{3/2} p + L p^{3/2} \log n }{\sqrt{n}} \right).
\]
Compare the last result with Lemma \ref{was_1d} in one dimensional case ($p=1$). The asymptotic by $L$ is the same or better depending on the distribution of $\xi$.
This bound remains sharp in case $L = 1$ and comparable with results of \cite{Bentkus, buzun2019gaussian} that studies Gaussian approximation in $W_1$.  
\end{remark}

\section{Approximation in distribution}

In the previous section we derived approximation in probability. Now, we will combine Theorem \ref{WLTsubG} (keeping in mind inequality \ref{markov_bound}) with anti-concentration property (Lemma \ref{anti-concentration}) and will prove approximation in distribution.
\begin{lemma}\label{anti-concentration}\cite{chernozhukov2017detailed}.
Let $\gammav$ be a centered Gaussian random vector in $\R^p$ such that $\E[\gammav^2_k]\geq \underline{\sigma}^2$  for all $k \in \{1,\ldots,p\}$ and some constants $\underline{\sigma}^2>0$, $\Delta>0$, then
\begin{eqnarray*}
&&\P\left( \max_{1 \leq k \leq p} \gamma_k \leq t+\Delta \right)-\P \left(  \max_{1 \leq k \leq p} \gamma _k \leq t \right ) \leq \Delta C_A(\log p),\\
&&C_A(\log p )=\frac{1}{\underline{\sigma}}(\sqrt{2\log p }+2)
.\end{eqnarray*}

\end{lemma}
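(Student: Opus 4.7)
The plan is to reduce the statement to a uniform upper bound on the density of $M = \max_{1 \leq k \leq p} \gamma_k$. Once one establishes $f_M(s) \leq C_A(\log p)$ for every $s$, the conclusion follows immediately since $\P(M \leq t + \Delta) - \P(M \leq t) = \int_t^{t+\Delta} f_M(s)\,ds \leq \Delta \, C_A(\log p)$.

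Because $M$ is not a smooth function of $\gammav$, I would first approximate it by the log-sum-exp smoothing $F_\beta(x) = \beta^{-1} \log \sum_{k=1}^p e^{\beta x_k}$, which satisfies the sandwich $\max_k x_k \leq F_\beta(x) \leq \max_k x_k + \beta^{-1}\log p$. The distribution of $M$ therefore agrees with that of $Y_\beta = F_\beta(\gammav)$ up to a deterministic shift of size at most $\beta^{-1}\log p$, so it suffices to bound the density of $Y_\beta$ and then optimise $\beta$. For this I would apply Gaussian integration by parts: since $\nabla F_\beta(x) = \pi(x)$ is the softmax probability vector, Stein's identity gives, for any smooth test $h$, an identity of the form $\E[h'(Y_\beta)\,\pi(\gammav)^{\T}\Sigma\,\pi(\gammav)] = \E[h(Y_\beta)\,\gammav^{\T}\pi(\gammav)] + \text{correction}$ involving the Hessian of $F_\beta$, from which one extracts a density estimate of the shape $f_{Y_\beta}(t) \leq C / \sqrt{\pi(\gammav)^{\T}\Sigma\,\pi(\gammav)}$ up to logarithmic factors.

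The \emph{main obstacle} is obtaining a lower bound on the quadratic form $\pi^{\T}\Sigma\,\pi$: for a spread-out $\pi$ combined with strongly correlated components, this form can drop well below $\underline{\sigma}^2$. I would resolve this by a two-regime argument depending on whether $\pi(\gammav)$ is peaked near some simplex vertex $e_k$ or is spread across many coordinates. On the peaked event, $\pi^{\T}\Sigma\,\pi \approx \Sigma_{kk} \geq \underline{\sigma}^2$, so Stein's identity yields the desired density bound; on the spread event, the top coordinates of $\gammav$ are all within $O(\beta^{-1}\log p)$ of one another, and a direct univariate density bound on any single $\gamma_k$, which is at most $1/(\underline{\sigma}\sqrt{2\pi})$, transfers cleanly to $F_\beta(\gammav)$. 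Balancing the smoothing error $\beta^{-1}\log p$ against the density factor by choosing $\beta \asymp \sqrt{\log p}$, and combining with the classical bound $\E[M] \leq \overline{\sigma}\sqrt{2\log p}$ together with concentration of $M$ around its mean (via the Borell--TIS inequality), delivers the claimed constant $C_A(\log p) = \underline{\sigma}^{-1}(\sqrt{2\log p}+2)$, matching the argument of Chernozhukov, Chetverikov and Kato.
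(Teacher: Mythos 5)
The paper offers no proof of this lemma --- it is imported verbatim from \cite{chernozhukov2017detailed} (Nazarov's inequality) --- so the only question is whether your argument would establish it, and I do not think it would. The central problem is that the smoothing step is incompatible with the form of the conclusion. Replacing $M=\max_k\gamma_k$ by $Y_\beta=F_\beta(\gammav)$ costs a deterministic shift of size $\beta^{-1}\log p$, so from a density bound $f_{Y_\beta}\le C_\beta$ you can only conclude
\[
\P(M\le t+\Delta)-\P(M\le t)\;\le\;\bigl(\Delta+\beta^{-1}\log p\bigr)\,C_\beta .
\]
With your choice $\beta\asymp\sqrt{\log p}$ the additive term is of order $\sqrt{\log p}\cdot C_\beta$, independent of $\Delta$; it cannot be ``balanced'' into a bound proportional to $\Delta$, and the claimed inequality then fails as $\Delta\to 0$. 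To kill that term you must let $\beta\to\infty$, but your Stein identity carries a correction involving the Hessian of $F_\beta$, whose norm grows like $\beta$, so the density bound you extract is not uniform in $\beta$. At best this route reproduces the weaker 2015-style anti-concentration bound, which carries an extra $\sqrt{\log(1/\Delta)}$ and is not of the form $\Delta\, C_A(\log p)$.

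The second gap is the ``spread'' regime. There you propose to transfer the univariate density bound $1/(\underline{\sigma}\sqrt{2\pi})$ for a single coordinate $\gamma_k$ to $F_\beta(\gammav)$, but the index of the near-maximal coordinate is random and correlated with the level of $Y_\beta$, so you must either identify it (you cannot) or union-bound over $k$, which reintroduces a factor of $p$ in place of $\sqrt{2\log p}$. This is exactly the crux that the cited proof resolves by a different mechanism: it works with the density of $M$ itself, $f_M(t)=\sum_k\sigma_k^{-1}\phi(t/\sigma_k)\,\P(\gamma_j\le t\ \forall j\ne k\mid\gamma_k=t)$, i.e.\ the Gaussian perimeter of the convex set $\{x:\ x_k\le t\ \forall k\}$, and controls the sum by splitting the faces according to whether $t/\sigma_k$ exceeds $\sqrt{2\log p}$; no smoothing, softmax, or Borell--TIS input appears. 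Your final appeal to $\E[M]\le\overline{\sigma}\sqrt{2\log p}$ is also a warning sign: $\overline{\sigma}$ does not occur in the hypotheses or the conclusion of the lemma, and any argument routed through $\E[M]$ would necessarily produce a constant depending on $\overline{\sigma}$ rather than the stated $\underline{\sigma}^{-1}(\sqrt{2\log p}+2)$.
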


\begin{lemma}\cite{fang2020highdimensional} \label{g_cmp}
Let $\gammav \in \ND(0, \Sigma)$, $\gammav' \in \ND(0, \Sigma')$ be centered Gaussian random vectors in $\R^p$, such that $\overline{\sigma}^2 \geq \E[\gammav^2_k]\geq \underline{\sigma}^2$ for all $k \in \{1,\ldots,p\}$. Then 
\[
\max_t \left | 
 \P\left( \max_{1 \leq k \leq p} \gamma_k \leq t \right)-\P \left(  \max_{1 \leq k \leq p} \gamma' _k \leq t \right ) \right |
\leq 
C \delta \log p \, 
\log \left( \frac{ \overline{\sigma}  } { \delta \underline{\sigma}   } \right), 
\]
where $C$ is some absolute constant and
\[
 \delta =  \frac{  \| \Sigma - \Sigma' \|_{\infty} } { \lambda_{\min}(\Sigma) }.
\]
\end{lemma}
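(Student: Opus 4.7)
Plan. The argument follows a Slepian--Stein interpolation combined with a double smoothing of $\Ind\{\max_k z_k \leq t\}$. Introduce the log-sum-exp soft-max $F_\beta(z) := \beta^{-1}\log\sum_{k=1}^{p}e^{\beta z_k}$, which satisfies $\max_k z_k \leq F_\beta(z) \leq \max_k z_k + \beta^{-1}\log p$ and whose softmax weights $\pi_k(z) := e^{\beta z_k}/\sum_{\ell}e^{\beta z_\ell}$ obey $\sum_k \pi_k \equiv 1$, and a smooth monotone cutoff $g_{t,\epsilon}\colon\R\to[0,1]$ with $g_{t,\epsilon}(y)=1$ for $y\leq t$, $g_{t,\epsilon}(y)=0$ for $y\geq t+\epsilon$, and $\|g_{t,\epsilon}^{(j)}\|_\infty \leq C\epsilon^{-j}$ for $j=1,2$. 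Set $m(z) := g_{t,\epsilon}(F_\beta(z))$. Since $\Ind\{\max_k z_k \leq t\} \leq m(z) \leq \Ind\{\max_k z_k \leq t+\epsilon+\beta^{-1}\log p\}$, Lemma \ref{anti-concentration} --- applicable to both $\gammav$ and $\gammav'$ thanks to the common lower bound $\underline{\sigma}^2$ on diagonal variances --- gives
\[
\bigl|\P(\max_k \gamma_k \leq t) - \E m(\gammav)\bigr| \leq C_A(\log p)\bigl(\epsilon + \beta^{-1}\log p\bigr),
\]
and the same bound for $\gammav'$.

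The heart of the proof is comparing $\E m(\gammav)$ with $\E m(\gammav')$ by Slepian interpolation. Couple $\gammav \perp\!\!\!\perp \gammav'$ and define $Z(s) := \sqrt{1-s}\,\gammav + \sqrt{s}\,\gammav' \sim \ND(0,(1-s)\Sigma+s\Sigma')$. By Gaussian integration by parts,
\[
\tfrac{d}{ds}\E m(Z(s)) \;=\; \tfrac12 \sum_{j,k}\bigl(\Sigma'_{jk}-\Sigma_{jk}\bigr)\,\E\bigl[\partial_j\partial_k m(Z(s))\bigr].
\]
Integrating over $s\in[0,1]$ and using $|\Sigma_{jk}-\Sigma'_{jk}| \leq \|\Sigma-\Sigma'\|_\infty$ yields
\[
\bigl|\E m(\gammav')-\E m(\gammav)\bigr| \leq \tfrac12\,\|\Sigma-\Sigma'\|_\infty \sup_{s\in[0,1]} \E \sum_{j,k}\bigl|\partial_j\partial_k m(Z(s))\bigr|.
\]
Direct calculation gives $\partial_j\partial_k m = g_{t,\epsilon}''(F_\beta)\,\pi_j\pi_k + g_{t,\epsilon}'(F_\beta)\,\beta(\pi_j\delta_{jk}-\pi_j\pi_k)$, so $\sum_{j,k}|\partial_j\partial_k m(z)| \leq \|g_{t,\epsilon}''\|_\infty + 2\beta\|g_{t,\epsilon}'\|_\infty \leq C(\epsilon^{-2} + \beta\epsilon^{-1})$ pointwise. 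Combining the three bounds, the total error is at most
\[
C\Bigl\{\|\Sigma-\Sigma'\|_\infty\bigl(\epsilon^{-2}+\beta\epsilon^{-1}\bigr) + C_A(\log p)\bigl(\epsilon+\beta^{-1}\log p\bigr)\Bigr\}.
\]
Setting $\beta = \epsilon^{-1}\log p$ equalises the two anti-concentration contributions and produces
\[
C\bigl\{\|\Sigma-\Sigma'\|_\infty\,\epsilon^{-2}\log p + C_A(\log p)\,\epsilon\bigr\}.
\]

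The main obstacle is the final optimisation. A naive one-shot balance in $\epsilon$ gives only a $\delta^{1/3}$ rate, whereas the claimed bound is essentially linear in $\delta$. The refinement recognises that the admissible smoothing scale $\epsilon$ is constrained to the dyadic interval $[\delta\underline{\sigma},\,\overline{\sigma}]$: below $\delta\underline{\sigma}$ the Hessian contribution becomes degenerate relative to the natural fluctuations of $\gammav$ (this is precisely where $\lambda_{\min}(\Sigma)$ and hence the normalisation $\delta = \|\Sigma-\Sigma'\|_\infty/\lambda_{\min}(\Sigma)$ enters), and above $\overline{\sigma}$ the anti-concentration term saturates. On each dyadic scale within this interval the contribution is $O(\delta\log p)$; summing over the $\log(\overline{\sigma}/(\delta\underline{\sigma}))$ scales --- equivalently, integrating the derivative bound along a multiplicative partition of the interpolation interval --- produces the claimed factor. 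The technical difficulty is organising this multiscale interpolation telescopically so that no additional polynomial factor in $p$ or $\delta^{-1}$ is accrued; this is precisely the step for which the normalisation inside $\delta$ is indispensable.
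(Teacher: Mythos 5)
The paper does not prove Lemma \ref{g_cmp} at all: it is imported verbatim from \cite{fang2020highdimensional}, so there is no internal proof to compare against. Judged on its own, your attempt has a genuine gap at exactly the step you flag as ``the main obstacle.'' Everything up to the optimisation is standard and correct (soft-max smoothing, anti-concentration for the smoothing error, Slepian interpolation with Gaussian integration by parts, the pointwise Hessian bound $\sum_{j,k}\left|\partial_j\partial_k m\right|\leq C(\epsilon^{-2}+\beta\epsilon^{-1})$), but as written it can only ever deliver a $\delta^{1/3}$-type rate, and even the intermediate $\delta^{1/2}$ rate of Chernozhukov--Chetverikov--Kato requires a step you omit: bounding $\E\sum_{j,k}\left|\partial_j\partial_k m(Z(s))\right|$ not pointwise but by exploiting that the integrand is supported on the event $\{F_\beta(Z(s))\in[t,t+\epsilon]\}$, whose probability is $O(\epsilon\sqrt{\log p}/\underline{\sigma})$ by anti-concentration applied to the interpolated Gaussian $Z(s)$. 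Without that localisation the pointwise bound loses a full power of $\epsilon$.

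The jump from there to the claimed near-linear dependence on $\delta$ is then asserted rather than proved. The final paragraph claims that ``on each dyadic scale the contribution is $O(\delta\log p)$'' and that summing over $\log(\overline{\sigma}/(\delta\underline{\sigma}))$ scales gives the result, but no estimate is offered for a single scale, $\lambda_{\min}(\Sigma)$ never enters any displayed inequality, and a telescoping over dyadic smoothing scales does not by itself change the exponent of $\delta$ --- each scale, bounded by the argument you actually wrote down, still contributes a $\delta^{1/3}$ or $\delta^{1/2}$ term. The mechanism that produces the $\delta\log p\,\log(\overline{\sigma}/(\delta\underline{\sigma}))$ bound in \cite{fang2020highdimensional} is a recursive self-improvement: one first establishes a crude comparison bound, feeds it back in as an improved anti-concentration/level-set estimate for the interpolated process $Z(s)$, and iterates; each round roughly maps an exponent $\alpha$ on $\delta$ to $(1+\alpha)/2$, and the $\log(1/\delta)$ factor counts the iterations needed to reach exponent $1$. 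That bootstrapping device (or an equivalent Stein-kernel argument) is the missing idea; without it the proof does not reach the stated rate.
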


\noindent Let's introduce an additional lemma which is usually needed to prove that convergence in distribution follows from the convergence in probability.
\begin{lemma}\label{weak_convegence}
For random variables $\xi$ and $\eta$, and a shift of size $\Delta>0$, 
\[
\P(\eta < t - \Delta) - \P(|\xi| \geq \Delta ) \leq \P(\eta + \xi < t) \leq  \P(\eta < t + \Delta) + \P(|\xi| \geq \Delta )
.\]
\end{lemma}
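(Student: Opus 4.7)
The plan is to prove both inequalities by conditioning on whether $|\xi|$ is small or large, i.e.\ on the events $\{|\xi| < \Delta\}$ and $\{|\xi| \geq \Delta\}$, which partition the sample space. On the ``small'' event we can control $\eta + \xi$ by $\eta$ up to an error of $\Delta$; on the ``large'' event we simply bound the probability by $\P(|\xi| \geq \Delta)$.

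More concretely, for the upper bound I would write
\[
\P(\eta + \xi < t) = \P(\eta + \xi < t,\, |\xi| < \Delta) + \P(\eta + \xi < t,\, |\xi| \geq \Delta),
\]
then bound the second term by $\P(|\xi| \geq \Delta)$. For the first term, on the event $\{|\xi| < \Delta\}$ we have $\eta < t - \xi \leq t + |\xi| < t + \Delta$, so the first term is at most $\P(\eta < t + \Delta)$. Adding these two estimates yields the right-hand inequality.

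For the lower bound I would start instead from $\P(\eta < t - \Delta)$ and split analogously:
\[
\P(\eta < t - \Delta) \leq \P(\eta < t - \Delta,\, |\xi| < \Delta) + \P(|\xi| \geq \Delta).
\]
On $\{|\xi| < \Delta\}$, $\eta < t - \Delta$ implies $\eta + \xi < t - \Delta + \Delta = t$, so the first term is at most $\P(\eta + \xi < t)$. Rearranging gives the left-hand inequality. No step here is an obstacle; the only thing to be careful with is the direction of the strict inequality and that $|\xi| < \Delta$ yields $-\Delta < \xi < \Delta$, which is used on both sides to trade $\eta + \xi$ against $\eta \pm \Delta$.
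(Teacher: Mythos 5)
Your proof is correct and uses essentially the same argument as the paper: partition on the events $\{|\xi| < \Delta\}$ and $\{|\xi| \geq \Delta\}$, trade $\eta + \xi$ against $\eta \pm \Delta$ on the small event, and absorb the large event into $\P(|\xi| \geq \Delta)$. The only cosmetic difference is that you derive the lower bound by splitting $\P(\eta < t - \Delta)$ rather than bounding $\P(\eta+\xi<t,\,|\xi|<\Delta)$ from below, which is a trivial rearrangement of the same inclusion.
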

\begin{proof}
Consider two events $H_1=\left\{|\xi|\geq\Delta\right\}$ and $H_2=\left\{|\xi|<\Delta\right\}$, they  compose a full group:
\begin{eqnarray*}
&&\P(\xi+\eta<t)=\P(\xi+\eta<t,H_1)+\P(\xi+\eta<t,H_2)=\P_1+\P_2
.\end{eqnarray*}
Lets find bounds for $\P_1+\P_2$. For $\P_1$, the bounds are
\begin{eqnarray*}
&&0\leq \P_1\leq \P(H_1)
.\end{eqnarray*}
For $\P_2$, the upper bound will be
\begin{eqnarray*}
&&\P_2=\P(\xi+\eta<t,-\Delta<\xi<\Delta)\leq \P(-\Delta + \eta <t)=\P(\eta<t+\Delta)
,\end{eqnarray*}
and the lower bound
\begin{eqnarray*}
&&\P_2=\P(\xi+\eta<t,-\Delta<\xi<\Delta)\geq\P(\Delta+\eta<t,-\Delta<\xi<\Delta)\geq\\
&&\P(\Delta+\eta<t)-\P(|\xi|\geq\Delta)=\P(\eta<t-\Delta)-\P(H_1)
.\end{eqnarray*}
We have found the bounds for $\P_1+ \P_2$ and consequently   for $\P(\xi+\eta < t )$.

\end{proof}

\begin{theorem} \label{final_dist_approx}
   Let $\gammav \in \ND(0, \Sigma)$ and $\{\xiv_i\}_{i=1}^n$ be i.i.d random vectors with zero mean and $\frac{1}{n} \Sigma$ variance and with domain in $\R^p$.  Assume additionally that each random variable $ \frac{\sqrt{n}}{\nunu} \xi_{ik} $ is sub-Gaussian (ref. expression \ref{expgamgm}). Restrict the diagonal elements of the variance matrix such that $\forall i: \underline{\sigma} \leq \Sigma_{ii} \leq \overline{\sigma}$. Denote by $\lambda_{\min}$ the minimal eigenvalue of matrix $\Sigma$. Then $\forall t \in \R $
   
   \[
 \left | 
   \P \left(\max_k \xi_k<t \right)  -  \P \left(\max_k\gamma_k<t \right)
 \right |
 \leq O \left( 
 \nunu^3 \frac{ \overline{\sigma} }{ \underline{\sigma} } \frac{\log^2 (np) }{ \sqrt{n}}
 + \nunu^3 \frac{\overline{\sigma}^2}{\lambda_{\min}} \frac{  \log(np)^{5/2} \log n }{  \sqrt{n}}
  \right).
    \]

\end{theorem}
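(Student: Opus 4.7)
The plan is to transfer the strong Gaussian approximation of Theorem \ref{main_res} into a uniform-in-$t$ bound on $|\P(\max_k \xi_k < t) - \P(\max_k \gamma_k < t)|$ by combining it with the anti-concentration inequality (Lemma \ref{anti-concentration}), the bridging Lemma \ref{weak_convegence}, and the Gaussian-to-Gaussian comparison Lemma \ref{g_cmp}. The two-term form of the target bound tracks exactly the two sources of error: the coupling plus anti-concentration produces the $\overline{\sigma}/\underline{\sigma}$ term, and a Gaussian-to-Gaussian comparison produces the $\overline{\sigma}^2/\lambda_{\min}$ term.

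First I would use Theorem \ref{main_res} (or the underlying Wasserstein bound from Theorem \ref{WLTsubG} with $L\asymp\log(np)$) to obtain, on a common probability space, a Gaussian vector $\tilde{\gammav}$ with the same law as $\gammav$ such that $\P(\|\xiv-\tilde{\gammav}\|_2 > \Delta) \leq e^{-t}$ for $\Delta = C(n,p)\|\Sigma\|^{1/2} e^{t/\log(np)}$. Since $|\max_k \xi_k - \max_k \tilde{\gamma}_k| \leq \|\xiv - \tilde{\gammav}\|_\infty \leq \|\xiv - \tilde{\gammav}\|_2$, applying Lemma \ref{weak_convegence} with $\eta = \max_k \tilde{\gamma}_k$ and the shift $\Delta$ yields
\begin{align*}
&\bigl|\P(\max_k \xi_k < t) - \P(\max_k \tilde{\gamma}_k < t)\bigr| \\
&\quad \leq \P(\|\xiv - \tilde{\gammav}\|_2 > \Delta) + \bigl[\P(\max_k \tilde{\gamma}_k < t+\Delta) - \P(\max_k \tilde{\gamma}_k < t-\Delta)\bigr],
\end{align*}
and the bracket is controlled by $2\Delta C_A(\log p) = O(\Delta \sqrt{\log p}/\underline{\sigma})$ via Lemma \ref{anti-concentration}. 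Since $\tilde{\gammav}\stackrel{d}{=}\gammav$, this transfers verbatim to the target functional.

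I would then calibrate $t\asymp\log(np)$ so that $e^{-t}$ becomes negligible (say $1/(np)$), leaving the dominant piece $\Delta\sqrt{\log p}/\underline{\sigma}$. With $\|\Sigma\|^{1/2}$ controlled through $\overline{\sigma}$ (after a diagonal normalization), the effective coupling radius scales as $\overline{\sigma}\nunu \log(np)^{3/2}/\sqrt{n}$, and multiplying by the anti-concentration factor $\sqrt{\log p}/\underline{\sigma}$ produces the first target term $\nunu^3 (\overline{\sigma}/\underline{\sigma}) \log^2(np)/\sqrt{n}$.

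The main obstacle is that $C(n,p)$ from Theorem \ref{main_res} carries polynomial-in-$p$ factors ($p$ and $p^{3/2}$), whereas the target is only polylog in $p$; the $L_2$-to-$L_\infty$ step $\|\cdot\|_\infty \le \|\cdot\|_2$ is too wasteful to absorb these. My plan is to interpose an auxiliary Gaussian $\gammav'\in\ND(0,\Sigma')$ chosen so that the coupling $(\xiv,\gammav')$ admits a sharper componentwise control of order $\overline{\sigma}\log(np)^{3/2}/\sqrt{n}$, and then apply Lemma \ref{g_cmp} to pay for the resulting covariance mismatch $\Sigma'\neq\Sigma$. The second stated term $\nunu^3 (\overline{\sigma}^2/\lambda_{\min}) \log(np)^{5/2}\log n/\sqrt{n}$ matches exactly the Lemma \ref{g_cmp} estimate $C\delta \log p \log(\overline{\sigma}/(\delta\underline{\sigma}))$ with $\delta = \|\Sigma-\Sigma'\|_\infty/\lambda_{\min}$ of order $\overline{\sigma}^2\log(np)^{3/2}/(\sqrt{n}\,\lambda_{\min})$ and the logarithmic factor contributing the extra $\log n$. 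Certifying that such a $\Sigma'$ is realizable — essentially upgrading the Euclidean-norm guarantee of Theorem \ref{main_res} to a componentwise guarantee against a shifted Gaussian reference — is the delicate step that the direct $L_2$ application cannot by itself provide.
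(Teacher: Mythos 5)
Your architecture is the same as the paper's — an auxiliary Gaussian $\gammav' \in \ND(0,\Sigma')$ coupled componentwise to $\xiv$, anti-concentration via Lemma \ref{anti-concentration}, the bridging Lemma \ref{weak_convegence}, and Lemma \ref{g_cmp} to pay for $\Sigma' \neq \Sigma$ — and you correctly diagnose that the direct $L_2$ application of Theorem \ref{main_res} is too lossy in $p$. But the step you defer (``certifying that such a $\Sigma'$ is realizable'') is precisely the content of the proof, and you do not supply it, so there is a genuine gap at the pivotal point. The paper's resolution is concrete and you should be able to reconstruct it: do not try to upgrade the Euclidean-norm guarantee at all. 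Instead, apply the \emph{one-dimensional} case of Theorem \ref{WLTsubG} (i.e.\ $p=1$) to each coordinate $\xi_k$ separately, which gives $W_L(\xi_k/\Sigma_{kk},\gamma_k'/\Sigma_{kk}) \leq C L^{3/2}\nunu^3/(\log L\sqrt{n})$ with no polynomial $p$-factor; couple the coordinates conditionally independently, $\pi = \pi(\xiv)\,\pi_1(\gamma_1'\mid\xi_1)\cdots\pi_p(\gamma_p'\mid\xi_p)$, so that $\P(|\max_k\gamma_k'-\max_k\xi_k|>\Delta)$ is bounded by a union bound $\sum_k \P(|\gamma_k'-\xi_k|>\Delta)$; and absorb the resulting factor $p$ inside the Markov bound $p\,(W_L/\Delta)^L$ by taking $L=\log(np)$, which costs only a constant factor $e$ in $\Delta$. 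This is what produces the first term $\nunu^3(\overline{\sigma}/\underline{\sigma})\log^2(np)/\sqrt{n}$ without any $\sqrt{p}$ or $p^{3/2}$ leakage.

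The second half of the delicate step — quantifying $\|\Sigma-\Sigma'\|_\infty$ for the covariance of the so-constructed $\gammav'$ — is also missing from your sketch beyond the assertion that it has the right order. The paper obtains it by writing $|\E\xi_k\xi_l-\E\gamma_k'\gamma_l'| \leq \sqrt{\E\xi_k^2}\sqrt{\E(\gamma_l'-\xi_l)^2}+\sqrt{\E(\gamma_l')^2}\sqrt{\E(\gamma_k'-\xi_k)^2}$ (Cauchy--Bunyakovsky after adding and subtracting $\xi_k\gamma_l'$), then controlling each $\sqrt{\E(\gamma_k'-\xi_k)^2}$ by $W_2 \leq W_L$ under the same componentwise coupling; this yields $\delta_\Sigma = O(\overline{\sigma}^2 L^{3/2}\nunu^3/(\lambda_{\min}\sqrt{n}\log L))$ and, through Lemma \ref{g_cmp}, the second term with its extra $\log n$. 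So your proposal identifies the right decomposition and the right lemmas, but the two computations that make the polylog-in-$p$ rate achievable — the coordinatewise $W_L$ coupling with the union bound tamed by $L=\log(np)$, and the Cauchy--Schwarz estimate of the covariance mismatch — are exactly the parts left open, and without them the argument does not close.
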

\begin{proof}
Consider a sequence of Gaussian random variables $\gamma'_1,\ldots,\gamma'_p$, such that all elements  are distributed as elements of vector $\gammav$, but their variance matrix $\Sigma'$ is unknown. We will approximate in probability each random variable  $\xi_k$ by $\gamma'_k$ independently from the other components. Next we will estimate $\| \Sigma - \Sigma' \|_\infty$ from the upper side and, finally, by means of Lemma \ref{g_cmp} we will compare distributions of $\gamma'$ and the original $\gamma$.    
From the Lemma \ref{weak_convegence}, one gets that for an arbitrary shift $\Delta > 0$
\begin{eqnarray*}
&&\P \left (\max_k\gamma'_k<t-\Delta \right)-\P \left (|\max_k\gamma'_k-\max_k\xi_k|\geq \Delta \right)\leq \P \left( \max_k\xi_k<\Delta \right)\\
&&\leq\P\left(\max_k\gamma'_k<t+\Delta\right)+\P\left(|\max_k\gamma'_k-\max_k\xi_k|>\Delta \right).
\end{eqnarray*}
Next, we use anti-concentration (Lemma~\ref{anti-concentration}) that allows us to ``move'' $\Delta$ out of the probability function:
\begin{eqnarray*}
&&\P \left(\max_k\gamma'_k<t+\Delta \right)\leq \P \left(\max_k\gamma'_k<t \right)+\Delta C_A, \\
&&\P \left(\max_k\gamma'_k<t-\Delta \right)\geq \P \left(\max_k\gamma'_k<t \right) - \Delta C_A.
\end{eqnarray*}
Now, we have to minimize the following expression by $\Delta$:
\begin{eqnarray*}
&& \min_{\pi \in \Pi[\gamma_1',\ldots, \gamma_p',\xiv]} \P\left(|\max_k\gamma'_k-\max_k\xi_k|>\Delta \right)+\Delta C_A,
\end{eqnarray*}
where  $\pi$ is a common distribution of $\gammav'$ and $\xiv$ and notation  $\pi \in \Pi[\gamma_1',\ldots, \gamma_p',\xiv]$ means that  the distribution of each gamma component and  $\xiv$ are fixed, but it does not include the common distribution of $\gammav$ as opposed to $\Pi[\gammav',\xiv]$.   
We will find the bound for $\P(|\max_k\gamma'_k-\max_k\xi_k|>\Delta)$ combining Boole's  inequality and one dimensional variant of equation (\ref{markov_bound}). 
\begin{align*}
\min_{\pi \in \Pi[\gamma_1',\ldots, \gamma_p',\xiv]} \P\left(|\max_k\gamma'_k-\max_k\xi_k|>\Delta \right) 
&\leq \min_{\pi \in \Pi[\gamma_1',\ldots, \gamma_p',\xiv]} \left\{ \sum_{k=1}^p \P(| \gamma'_k- \xi_k|>\Delta) \right\} \\
&= \sum_{k=1}^p \min_{\pi_k \in \Pi[\gamma_k',\xi_k]} \P(| \gamma'_k- \xi_k|>\Delta).
\end{align*}
In the last step we set the argmin $\pi = \pi(\xiv) \pi_1(\gamma_1' | \xi_1) \ldots \pi_p(\gamma_p' | \xi_p)$ and this allowed us to swap the sum and min operations.  From (\ref{markov_bound}) we obtain that $\forall k$
\[
\min_{\pi_k \in \Pi[\gamma_k',\xi_k]} \P(| \gamma'_k- \xi_k|>\Delta) \leq \frac{1}{\Delta^L} \sum_k  W_L^L(\xi_k, \gamma'_k).
\]
Note that  for a fixed component $k$:  $ W_L(\xi_k, \gamma'_k) = \Sigma_{kk}  W_L(\xi_k/ \Sigma_{kk}, \gamma'_k/ \Sigma_{kk}) $ and from Theorem \ref{WLTsubG} follows  
\begin{equation}
\label{WL_bound}
W_L(\xi_k/ \Sigma_{kk}, \gamma'_k/ \Sigma_{kk})  \leq   \frac{C L^{3/2} \nunu^3}{\log L \sqrt{n}}.
\end{equation}
Therefore, the minimization task is
\[
  \Delta C_A + \frac{ p}{\Delta^L} \left (  \frac{C \overline{\sigma} L^{3/2} \nunu^3}{\log L \sqrt{n}}\right)^L.
\]
Choose $L  = \log (np)$ and $\Delta$ that will satisfy the condition
\begin{eqnarray*}
&& \frac{ p}{\Delta^L} \left (  \frac{C \overline{\sigma} L^{3/2} \nunu^3}{\log L \sqrt{n}}\right)^L \leq \frac{1}{n},\\
&&\Delta=  \frac{C e \overline{\sigma}\log^{3/2} (np) \nunu^3}{ \sqrt{n}} .
\end{eqnarray*}
Setting this value in the minimization task, gives the required bound in the approximation in probability. So, we have 
   \[
 \left | 
   \P \left(\max_k \xi_k<t \right)  -  \P \left(\max_k\gamma'_k<t \right)
 \right |
 \leq O \left( 
 \nunu^3 \frac{ \overline{\sigma} }{ \underline{\sigma} } \frac{\log^2 (np) }{ \sqrt{n}}
 \right).
    \]
Now compare $\gammav'$ with $\gammav$. For that one has to estimate $\| \Sigma - \Sigma' \|_\infty$. In the next expression we will consider arbitrary element $\Sigma'_{kl}$ and imply property $W_2(\cdot, \cdot) \leq W_L(\cdot, \cdot)$ (Jensen's inequality) and Cauchy–-Bunyakovsky inequality for the correlation of two random variables.  
\begin{align*}
\left | \E \xi_k \xi_l  - \E \gamma'_k \gamma'_l \right|  
& = \left | \E (\xi_k \xi_l - \xi_k \gamma'_l + \xi_k \gamma'_l -  \gamma'_k \gamma'_l) \right| \\
& \leq  \sqrt{\E \xi^2_k} \sqrt{ \E (\gamma'_l - \xi_l)^2 } + \sqrt{\E (\gamma'_l)^2} \sqrt{ \E (\gamma'_k - \xi_k)^2 } \\
& \leq 2  \overline{\sigma}^2 W_L (\xi_k/ \Sigma_{kk}, \gamma'_k/ \Sigma_{kk}) \\
& \leq \frac{C \overline{\sigma}^2 L^{3/2} \nunu^3}{\log L \sqrt{n}}.
\end{align*}
In the last inequality we have used the upper bound (\ref{WL_bound}). 
Denote $\delta_\Sigma = \max_{kl} \left | \E \xi_k \xi_l  - \E \gamma'_k \gamma'_l \right| / \lambda_{\min} (\Sigma) $. From Lemma \ref{g_cmp} follows $\forall t$
\begin{align*}
 \left | 
 \P\left( \max_{1 \leq k \leq p} \gamma_k \leq t \right)-\P \left(  \max_{1 \leq k \leq p} \gamma' _k \leq t \right ) \right |
&\leq 
C \delta_\Sigma  \log p \, 
\log \left( \frac{ \overline{\sigma}  } { \delta_\Sigma \underline{\sigma}   } \right) \\
&\leq  \frac{C \overline{\sigma}^2 \log(np)^{5/2} \nunu^3}{ \lambda_{\min} (\Sigma) \sqrt{n}} \log n.
\end{align*}

\end{proof}

\section{Experiments }

We have performed empirical validation of the asymptotic of Gaussian approximation. It includes two experiments: with Wasserstein distance $W_L$ and approximation in distribution with maximum norm. 

Theorem \ref{WLTsubG} states that the convergence rate of  Gaussian approximation for $W_L$ is $O(L^{3/2}p / \sqrt{n} + L p^{3/2} / \sqrt{n} )$. This is relatively close to the empirical asymptotic which is $O(p^{3/2} L^{1/2} / \sqrt{n})$. More precisely, the computed power of $n$ is $0.498 \approx 0.5$,  power of $p$ is $1.506 \approx 1.5$ and power of $L$ is $0.439 \approx 0.5$. The measurements of this experiment are presented in Figure \ref{fig:was}. 

The second experiment relates to Theorem \ref{final_dist_approx}. It states that the convergence rate of the  Gaussian approximation for maximum norm is $O(\log^{5/2}p / \sqrt{n})$. This is also relatively close to the empirical asymptotic which is $O(\log p / \sqrt{n})$. More precisely, the computed power of $n$ is $0.496 \approx 0.5$,  power of $p$ is $0.997 \approx 1.0$. It seems that the logarithmic factor may be improved in this theorem and it also depends on the power of $L$ in the previous asymptotic.  The measurements of this experiment are presented in Figure \ref{fig:max}. 

In both experiments we have sampled random variables $\xi_{ik}$ for $1 \leq i \leq n$ and $1 \leq k \leq p$ from Bernoulli distribution. Amount of samples for $W_L$ is $10$k and for the maximum norm is $100$k. The simulation uses random generators from pytorch\footnote{\url{https://pytorch.org/}} library and GPU device of Tesla V100. For Wasserstein distance computation we have used Sinkhorn \cite{NIPS2013_cuturi} algorithm implementation from library geomloss\footnote{\url{https://www.kernel-operations.io/geomloss/}} with hyperparameters $\text{blur}=0.01$ and $\text{scaling} = 0.99$.

\begin{figure}[hbt!]
\includegraphics[scale=0.3]{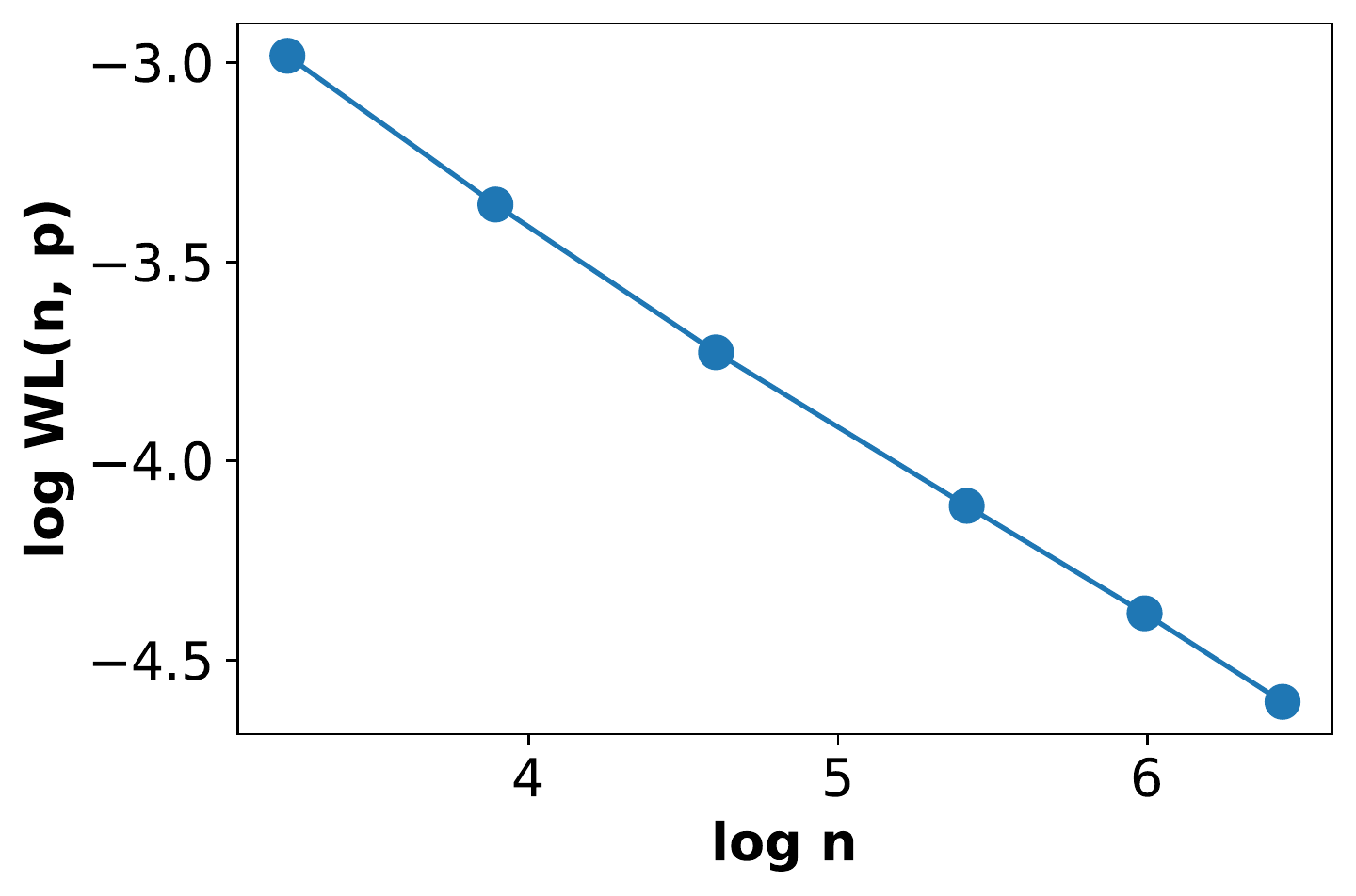}
\includegraphics[scale=0.3]{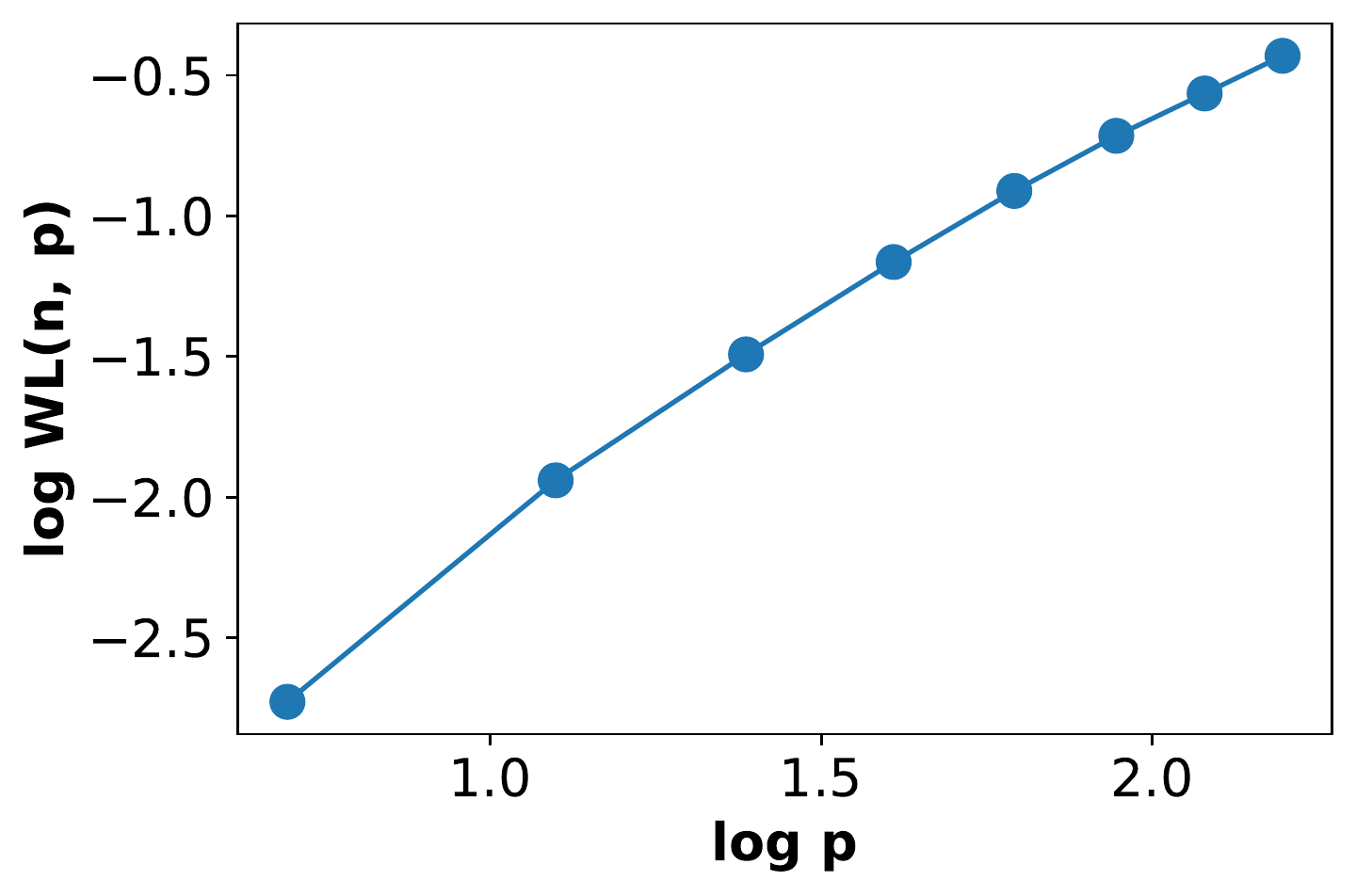}
\includegraphics[scale=0.3]{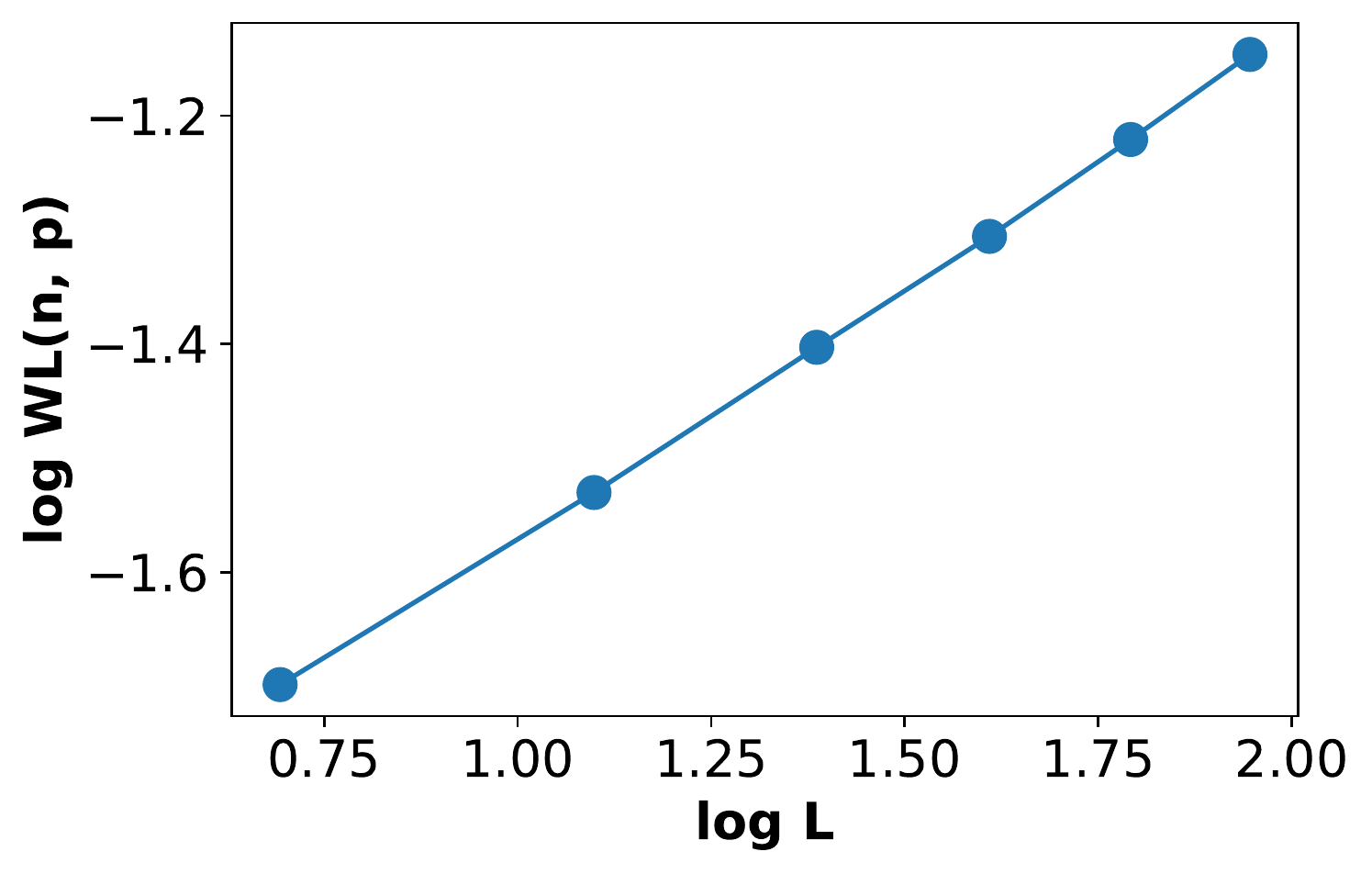}
\caption{Asymptotic dependence of Wasserstein distance $W_L(\xiv, \gammav)$ on parameters $n$ (amount of elements in sum), $p$ (dimension) and $L$ (power of the ground distance). The empirical convergence rate is $O(p^{3/2} L^{1/2} / n^{1/2})$ for i.i.d  $\xi_{ik}\in (\mathcal{B}e(0.5) - 0.5) /\sqrt{n} $. }
\label{fig:was}
\end{figure}

\begin{figure}[hbt!]
\includegraphics[scale=0.25]{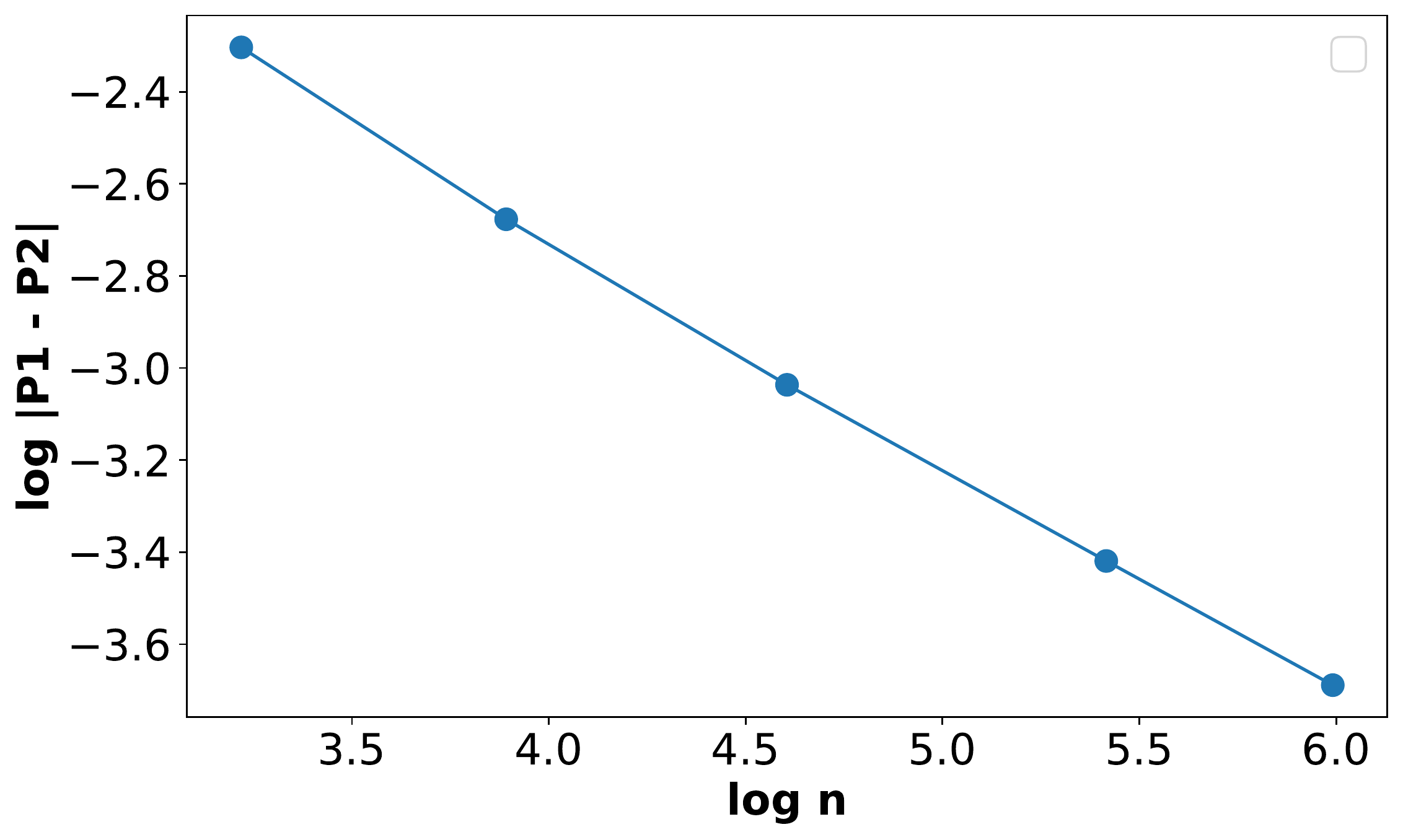}
\includegraphics[scale=0.25]{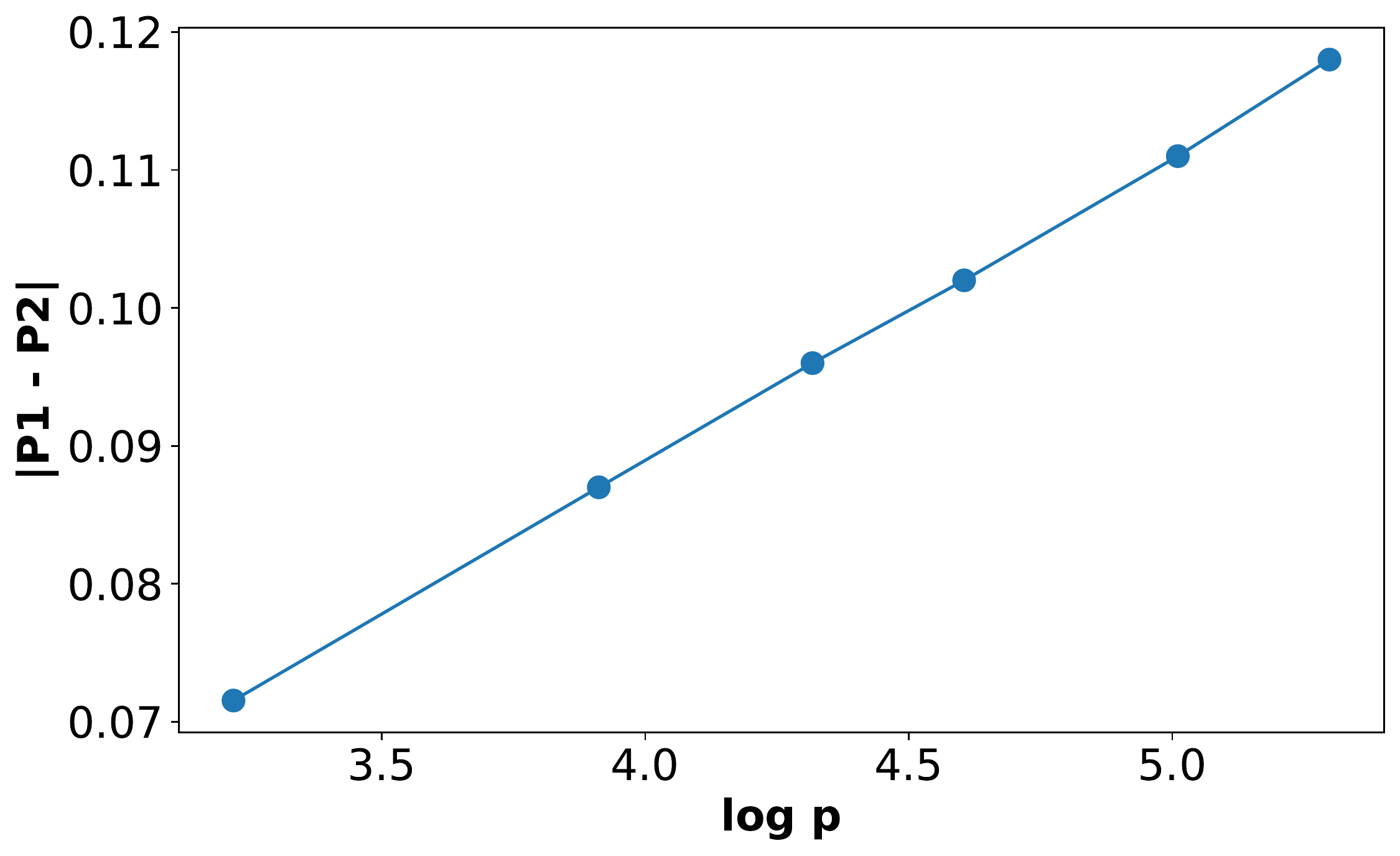}
\caption{
Illustration of the dependence of convergence rate on $\{p,n\}$ of Gaussian approximation with maximum norm. 
Left: $p = 5$, $n \in \{25, 49, 100, 225, 400\}$. 
Right: $n = 50$, $p \in \{25, 50, 75, 100, 150, 200\}$.
The empirical convergence rate is  $O (\log p / \sqrt{n})$ for
 i.i.d  $\xi_{ik}\in (\mathcal{B}e(0.5) - 0.5) /\sqrt{n} $.
}
\label{fig:max}
\end{figure}

\newpage

\bibliographystyle{imsart-number} 

\bibliography{references}

\end{document}